\DeclareMathOperator{\Fix}{Fix}
\DeclareMathOperator{\MF}{MF}
\newcommand{\htp}{\simeq}
\newtheorem{thm}{Theorem}
\newtheorem{lem}[thm]{Lemma}
\newtheorem{conj}[thm]{Conjecture}
\theoremstyle{definition}
\newtheorem{exl}[thm]{Example}
\newtheorem{defn}[thm]{Definition}
\begin{document}

\bibliographystyle{hplain}

\title{The Wecken property for random maps on surfaces with boundary}
\author[J. Brimley]{Jacqueline Brimley}
\address{J. Brimley, Dept of Mathematics and Computer Science,
Fairfield University,
Fairfield, CT 06824}
\email{jacqueline.brimley@student.fairfield.edu}
\author[M. Griisser]{Matthew Griisser}
\address{M. Griisser, Department of Mathematics, Georgia Tech, Atlanta, GA 30332}
\email{mgriisser3@gatech.edu}
\author[A. Miller]{Allison Miller}
\address{A.~Miller, Department of Mathematics Pomona College Claremont, CA 91711}
\email{anm12008@mymail.pomona.edu}
\author[P. C. Staecker]{P. Christopher Staecker}
\address{P. C.~Staecker, Dept of Mathematics and Computer Science,
Fairfield University,
Fairfield, CT 06824}
\email{cstaecker@fairfield.edu}

\begin{abstract}
A selfmap is Wecken when the minimal number of fixed points among all maps in its homotopy class is equal to the Nielsen number, a homotopy invariant lower bound on the number of fixed points. All selfmaps are Wecken for manifolds of dimension not equal to 2, but some non-Wecken maps exist on surfaces.

We attempt to measure how common the Wecken property is on surfaces with boundary by estimating the proportion of maps which are Wecken, measured by asymptotic density. Intuitively, this is the probability that a randomly chosen homotopy class of maps consists of Wecken maps. We show that this density is nonzero for surfaces with boundary.

When the fundamental group of our space is free of rank $n$, we give nonzero lower bounds for the density of Wecken maps in terms of $n$, and compute the (nonzero) limit of these bounds as $n$ goes to infinity. 
\end{abstract}

\maketitle

When $X$ is a compact ANR and $f:X \to X$ is a selfmap, the \emph{Nielsen number} $N(f)$ is a homotopy and homotopy-type invariant which gives a lower bound for the cardinality of $\Fix(f)$, the fixed point set of $f$. We will focus on the case where $X$ has the homotopy-type of a surface with boundary, or equivalently a bouquet of circles. Several techniques have been developed for computing $N(f)$ in this setting, notably Wagner's algorithm of \cite{wagn99}, which succeeds for ``most'' maps $f$, in a technical sense which we will describe. 

Let $\MF(f)$ be the \emph{minimal number of fixed points}, defined as
\[ \MF(f) = \min \{\#\Fix(g) \mid g \htp f \}. \]
The Nielsen number is defined so that $N(f) \le \MF(f)$, and the classical work of Wecken \cite{weck41} shows that in fact these quantities are equal when $X$ is a manifold (with or without boundary) of dimension not equal to 2. The equality of $N(f)$ and $\MF(f)$ in dimension 2 was an open question for decades, until Jiang in \cite{jian84} gave an example of a map on the pants surface with $N(f) = 0$ but $\MF(f) = 2$.

We will say that a map $f$ is \emph{Wecken} when $N(f)=\MF(f)$. Several classes of Wecken maps have been identified in the literature. Jiang and Guo demonstrated that homeomorphisms are Wecken in \cite{jg93}. Wagner in \cite{wagn97} gave three classes of Wecken maps on the pants surface. 

Our goal in this paper is to measure the proportion of selfmaps on surfaces with boundary (and spaces of the same homotopy type) which are Wecken. We will measure this proportion in the language of asymptotic density and genericity (see \cite{ks08}).

For a free group $G$ and a natural number $p$, let $G_p$ be the subset
of all words of length at most $p$. The \emph{asymptotic density} (or simply
\emph{density}) of a subset $S\subset G$ is defined as 
\[ D(S) = \lim_{p \to \infty} \frac{|S \cap G_p|}{|G_p|}, \]
where $|\cdot|$ denotes the cardinality. The set $S$ is said to be
\emph{generic} if $D(S) = 1$.

Let $G= \langle a_1, \dots, a_n\rangle$ be the free group on $n$ generators. 
We will sometimes consider $\overline{G_p}$, the set of all words of length exactly $p$. When constructing a word in $\overline{G_p}$, there are $2n$ choices for the first letter and $2n-1$ choices for each of the $p-1$ subsequent letters, since we cannot choose the inverse of the immediately preceding letter. Thus $|\overline{G_p}| = 2n(2n-1)^{p-1}$, and so 
\[ \left|G_p \right| = 1+ \sum_{k=1}^{p}{\overline{G_k}}= 1+ \sum_{k=1}^{p}{(2n)(2n-1)^{k-1}}= \frac{n(2n-1)^p -1}{n-1}.
\]

Similarly, if $S \subset G^n$ is a set of $n$-tuples of
elements of $G$, the asymptotic density of $S$ is defined as
\[ D(S) = \lim_{p \to \infty} \frac{|S \cap G_p^n|}{|G_p^n|}, \]
and $S$ is called \emph{generic} if $D(S) = 1$.
Note that $|G_p^n|= |G_p|^n$. 

A homomorphism on free groups $G \to H$ with $G = \langle a_1,
\dots, a_n\rangle$ 
is equivalent combinatorially to an $n$-tuple of elements of $H$
(the $n$ elements are the words $\phi(a_1), \dots, \phi(a_n)$). Thus
the asymptotic density of a set of 
homomorphisms can be defined in the same sense as above, viewing the set of
homomorphisms as a collection of $n$-tuples. 

The homotopy class of a selfmap on a surface with boundary is determined by its induced map on the fundamental group, which is a free group. If $\phi:G \to G$ is the induced homomorphism of a Wecken map on the surface with fundamental group $G$, we say that $\phi$ is Wecken. Let $W_n$ be the set of Wecken homomorphisms of the free group on $n$ generators. We wish to estimate $D(W_n)$. This is the ``proportion of selfmaps which are Wecken''  discussed informally above. 

Existing classifications of Wecken maps are not very informative concerning the question of the asymptotic density. Jiang and Guo's result in \cite{jg93} that homeomorphisms are Wecken gives no information, since the density of homomorphisms $G \to G$ which are isomorphisms is zero (a result in \cite{stae11} shows that the set of surjections has density 0). Wagner's work in \cite{wagn97} only addresses maps for the case where $n=2$ (the pants surface). This will lead to nonzero lower bounds on $D(W_2)$, but it will not allow any measurement of $D(W_n)$ for $n \neq 2$.

The work of this paper was undertaken with a somewhat experimental approach. Before our detailed work began, randomized computer tests were used to attempt some informal estimations of the quantities to be computed in this paper. The whole paper can be seen as an attempt to rigorously prove various properties which became immediately clear upon examining these computer simulations.

Our main results will consist of various bounds on $D(W_n)$ and a related quantity $D(V_n) \le D(W_n)$ for a certain related set $V_n$. In particular we show that $D(W_n) \neq 0$ for all $n$, and that $\lim_{n\to\infty} D(W_n) \neq 0$. These are new results, and we additionally give nonzero lower bounds for these quantities. Our actual derived lower bounds are somewhat complicated, and we summarize 
some of their values 
in Table \ref{boundstable}. Since $D(V_n) \le D(W_n)$, the lower bounds on $D(V_n)$ are also lower bounds on $D(W_n)$. The upper bounds on $D(V_n)$ are not necessarily upper bounds on $D(W_n)$, but should be viewed as theoretical upper bounds on the effectiveness of our approach in estimating $D(W_n)$. The particular values of $n$ given in the table were chosen arbitrarily-- we require a special argument for $n=2$, but all other values of $n$ are treated by the same methods.

The $n=2$ case is handled by Theorem \ref{n=2bound}, the upper bounds on $D(V_n)$ for $n>2$ are from Theorem \ref{upperbd}, and the lower bounds are from Theorem \ref{bestbound}. We also consider the quantity $\lim_{n\to\infty} D(W_n)$. We prove that this quantity is at least $e^{-3}\approx .0497$ and give strong evidence that it is at least $e^{-2} \approx .1353$. Based on our computer experiments, we conjecture that it is at least $e^{-1} \approx .3678$.

\begin{table}
\begin{center}
\begin{tabular}{c|r@{ $\ge$ }c@{ $\ge$ }l}
$n$ & \multicolumn{3}{c}{Density bounds} \\
\hline
2 & \multicolumn{3}{c}{$D(W_2) \ge .2129$} \\
3 & .3403 & $D(V_3)$ & .0252 \\
5 & .3430 & $D(V_5)$ & .0694\\
10 & .3590 & $D(V_{10})$ & .1029 \\
20 & .3634 & $D(V_{20})$ & .1193 \\
50 & .3661 & $D(V_{50})$ & .1289 
\end{tabular}
\caption{Best derived bounds for $D(V_n)$ and $D(W_n)$ for various $n$\label{boundstable}}
\end{center}
\end{table}

This paper contains some computations and algebraic simplifications which require a computer. Where possible we have used the open source computer algebra system Sage, but some computations have required Mathematica. Sage and Mathematica code for all computations can be found at the last author's website.\footnote{\url{http://faculty.fairfield.edu/cstaecker}}

The structure of the paper is as follows: In Section \ref{wagner} we discuss the details of Wagner's algorithm, and present our experimental results. In the following section we give a lower bound for the density of certain classes of Wecken maps that Wagner identifies for $n=2$. In Section \ref{der} we give a relatively simple proof that $D(W_n) > 0$ for $n > 2$, and in Section \ref{new} we give a more sophisticated lower bound for $D(W_n)$ and discuss the limit of this bound as $n$ goes to infinity.

We would like to thank the Fairfield University Sciences Institute and the organizers of the International Conference on Nielsen Fixed Point Theory and Related Topics at Capital Normal University in Beijing, China for graciously supporting our attendance at the conference. This paper is the product of a summer REU project at Fairfield University supported by the National Science Foundation and the Department of Defense under Grant No.~1004346.

\section{Wagner's algorithm and Wecken maps}\label{wagner}
Our basic approach to estimating the density of Wecken maps is via Wagner's algorithm of \cite{wagn99} for computation of the Nielsen number. We begin with a brief overview of the technique.

First, any self-map $f$ on a bouquet of circles that induces the homomorphism $\phi: G \to G$ is homotopic to a ``standard form" map (described in \cite{wagn99}). A map in Wagner's standard form has a fixed point $x_0$ at the wedge point and a fixed point identified with each occurrence of $a_i$ or $a_i^{-1}$ in the reduced word form of $\phi(a_i)$. 

Wagner's algorithm only works with maps which obey the following \emph{remnant} condition:  An endomorphism $\phi:G \to G$ is said to \emph{have remnant} when, for every $i$, there is a nontrivial subword of $\phi(a_i)$ that does not cancel in all products of the form 
\[ \phi(a_j)^{\pm1}\phi(a_i)\phi(a_k)^{\pm1} \]
except for when $j$ or $k$ equals $i$ and the exponent is $-1$.  

A theorem of Robert F. Brown in \cite{wagn99} established that the set of endomorphisms with remnant is generic.

For a given endomorphism $\phi:G \to G$ we define the set of \emph{Wagner tails}, which are elements of $G$. For each occurence of the letter $a^\epsilon_i$ (where $\epsilon$ $\in$ \{$+1$,$-1$\}) in $\phi(a_i)$, we write a reduced product $\phi(a_i)$ = $va^{\epsilon}_i\overline{v}$. Then $w$ and $\overline{w}$ are Wagner tails, where:
\begin{displaymath}
w =
\begin{cases}       
v & $if $ \epsilon = 1\\       
va^{-1}_i & $if $ \epsilon = -1     
\end{cases}   
\qquad
\overline{w} =
\begin{cases}
\overline{v}^{-1} & $if $ \epsilon = 1\\
\overline{v}^{-1}a_i & $if $ \epsilon = -1
\end{cases}
\end{displaymath}
In addition, for the wedge point we say that $w = \overline{w} = 1$.

We say that such Wagner tails $w$ and $\overline{w}$ \emph{arise from an occurence of} $a^{\epsilon}_i$ in $\phi(a_i)$. Wagner shows that the fixed point index of $x_{i}$ is equal to $-\epsilon$ and that the fixed point index of $x_0$ is 1.
Letting $w_1,\overline{w}_1$ and $w_2,\overline{w}_2$ be Wagner tails arising from two different fixed points, the fixed points are \emph{directly related} when $\{w_1,\overline{w}_1\} \cap \{w_2,\overline{w}_2\} \neq \emptyset$. Two fixed points $x, y$ are  \emph{indirectly related} when there is a sequence of fixed points $x=x_0, x_{1}, \ldots, x_{k-1}, x_{k}=y$ such that $x_{i}$ is directly related to $x_{i+1}$ for $0 \leq i < k$. This indirect relation is an equivalence relation. 

Theorem 3.7 of \cite{wagn99} shows that when $\phi$ has remnant, the number of such equivalence classes with nonzero fixed point index sum (these are called \emph{essential fixed point classes}) is equal to $N(f)$.

\begin{exl}\label{ex}
Let $f: X \to X$ be a self map and $\phi:G \to G$ be its induced homomorphism, where $G = \langle a,b \rangle$, $\phi(a) = ba^3b$, and $\phi(b) = ab^{-1}a^2$.

The three occurrences of the letter $a$ in $\phi(a)$ indicate that there are three fixed points inside the loop $a$ (which we will call $x_1, x_2, $ and $x_3$) and the single occurrence of $b^{-1}$ in $\phi(b)$ indicates that there is one fixed point inside the loop $b$ (which we will call $x_4$). The wedge point is also a fixed point, and we will denote it as $x_0$. Then we can compute the Wagner tails and indices as in  Table \ref{tails}.
\begin{table}[htdp]
\begin{center}
\begin{tabular}{| c c c c}
Fixed Point & Index & $w$ & $\overline{w}$ \\
\hline
$x_0$ & $+1$ & $1$ & $1$ \\
$x_1$ & $-1$ & $b$ & $b^{-1}a^{-2}$ \\
$x_2$ & $-1$ & $ba$ & $b^{-1}a^{-1}$ \\
$x_3$ & $-1$ & $ba^{2}$ & $b^{-1}$ \\
$x_4$ & $+1$ & $ab^{-1}$ & $a^{-2}b$ \\
\end{tabular}
\end{center}
\caption{Wagner tails for Example \ref{ex} \label{tails}}
\end{table}%

As none of the Wagner tails are directly related, each fixed point belongs to its own class. This means that each class has a nonzero fixed point index sum and that each class is consequently essential. It is a classical result of Nielsen theory that fixed points from different classes can never be combined by homotopy. Therefore, $N(\phi)=\MF(\phi)$ = 5, and our example is Wecken.
\end{exl}

Our fundamental tool for identifying a Wecken map is the following easy observation:

\begin{lem}\label{differenttails}
Let $\phi:G \to G$ be a homomorphism with remnant. If the Wagner tails of $\phi$ are all different (except for the repeated word 1 at the wedge point), then $\phi$ is Wecken.
\end{lem}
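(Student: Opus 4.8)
The plan is to read off $N(\phi)$ directly from Wagner's combinatorial description of the Nielsen number and then compare it with the number of fixed points already present in a standard form representative of $\phi$. First I would invoke Theorem 3.7 of \cite{wagn99}, which applies since $\phi$ has remnant: if $f$ is a standard form map inducing $\phi$, then $N(\phi) = N(f)$ equals the number of indirect-relation classes of fixed points of $f$ whose fixed point index sum is nonzero. So it suffices to understand these classes under the hypothesis.

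Next I would observe that the hypothesis forces every indirect-relation class to be a singleton. Two distinct fixed points $x$, $y$ are directly related exactly when $\{w_x,\overline w_x\}\cap\{w_y,\overline w_y\}\neq\emptyset$; but the assumption that all Wagner tails are distinct apart from the two copies of $1$ belonging to $x_0$ says precisely that distinct fixed points have disjoint Wagner-tail sets (in particular no non-wedge tail equals $1$, so nothing is directly related to $x_0$). Hence no two distinct fixed points are directly related, and since indirect relation is the transitive closure of direct relation, each fixed point is alone in its class — exactly the situation illustrated in Example \ref{ex}.

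Then I would note that each such singleton class is essential: the wedge point $x_0$ has index $+1$, and each other fixed point has index $-\epsilon=\pm1$, so in every case the index sum of the class is nonzero. Therefore $N(\phi)$ equals the total number of fixed points of the standard form map $f$, namely $1+\sum_i m_i$, where $m_i$ is the number of occurrences of $a_i^{\pm1}$ in the reduced word $\phi(a_i)$.

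Finally I would close the inequality. Since $f$ is a representative of the homotopy class determined by $\phi$, we have $\MF(\phi)=\MF(f)\le \#\Fix(f)=1+\sum_i m_i=N(\phi)$. Together with the general bound $N(\phi)\le\MF(\phi)$ this gives $N(\phi)=\MF(\phi)$, i.e.\ $\phi$ is Wecken. There is no real obstacle here; the only points needing care are the correct interpretation of the ``all different'' hypothesis (that it also rules out a non-wedge tail equal to $1$, so that $x_0$ is isolated) and the standard fact that $\MF$ is bounded above by the fixed point count of any representative, here the standard form map whose fixed points Wagner's construction enumerates exactly.
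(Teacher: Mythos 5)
Your proof is correct and follows essentially the same approach as the paper: distinct Wagner tails force every fixed point into its own class, each singleton class has index $\pm1$ hence is essential, and therefore $N(\phi)$ equals the fixed point count of the standard-form representative, which bounds $\MF(\phi)$ from above. You spell out the final inequality $\MF(\phi)\le\#\Fix(f)=N(\phi)\le\MF(\phi)$ a bit more explicitly than the paper does, but the argument is the same.
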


\begin{proof}
If every Wagner tail of $\phi$ is different (except for the repeated word 1 at the wedge point), then every fixed point of $f$ (a continuous map whose induced homomorphism is $\phi$) is in its own fixed point class. Because each fixed point has index $\pm 1$ and is the only element in its fixed point class, the sum of the indices for each fixed point class is $\pm 1$. So, every fixed point class is essential. This implies $N(\phi) = \MF(\phi)$, and hence $f$ is Wecken.
\end{proof}

When $G$ is the free group on $n$ generators, let $V_n$ be the set of endomorphisms of $G$ whose Wagner tails are all different, let $W_n$ be the Wecken endomorphisms, and let $R_n$ be the endomorphisms with remnant. The above Lemma is that $V_n\cap R_n \subset W_n$, and thus $D(V_n\cap R_n) \le D(W_n)$. Since $R_n$ is generic we have $D(V_n\cap R_n) = D(V_n)$, and so $D(V_n) \le D(W_n)$.

The density of $V_n$ can be measured experimentally by generating homomorphisms of $G$ at random using particular bounded word lengths, and testing if they are in $V_n$. This test is easily done on a computer. At the outset of this project we computed several of these random trials, resulting in the data presented in Table \ref{data}.

\begin{table}
\begin{center}
\input{plot.tex}

\vspace{1cm}

\begin{tabular}{|c|l l l l l l l|}
\hline
 & \multicolumn{7}{c|}{$D_p(V_n)$ approximations} \\
$p$ & $n=2$ & $n=3$ & $n=4$ & $n=5$ & $n=10$ & $n=20$ & $n=50$ \\
\hline
2	& .3916	& .3705	& .3721	& .3763	& .3732	& .3637	& .3683	\\ 
3	& .3128	& .2980	& .3068	& .3210	& .3338	& .3472	& .3597	\\
4	& .2474	& .2628	& .2935	& .3075	& .3400	& .3496	& .3632	\\
5	& .2227	& .2534	& .2775	& .2956	& .3354	& .3572	& .3676	\\
6	& .1987	& .2476	& .2727	& .2976	& .3326	& .3549	& .3631	\\
7	& .1969	& .2414	& .2776	& .2980	& .3206	& .3356	& .3621	\\
8	& .1949	& .2506	& .2746	& .2896	& .3336	& .3458	& .3577	\\
9	& .1887	& .2522	& .2737	& .3002	& .3302	& .3496	& .3582	\\
10	& .1885	& .2542	& .2732	& .2982	& .3400	& .3427	& .3615	\\
11	& .1902	& .2412	& .2755	& .2961	& .3341	& .3465	& .3604	\\
12	& .1920	& .2376	& .2727	& .2983	& .3367	& .3446	& .3617	\\
13	& .1926	& .2512	& .2732	& .2970	& .3315	& .3540	& .3608	\\
14	& .1934	& .2482	& .2822	& .2931	& .3323	& .3513	& .3488 \\
\hline
\end{tabular}
\end{center}
\caption{Experimental data for various $n$\label{data}}
\end{table}

The values approximated in the table are $D_p(V_n) = \frac{|V_n\cap G_p^n|}{|G_p^n|}$ for $p \in \{2, \dots, 14\}$ and $n\in\{2,3,4,5,10,20,50\}$. These values of $n$ were chosen arbitrarily-- the behavior seems to be similar for any $n$. These approximations were computed by producing 10000 random homomorphisms of $G_p^n$ and measuring exactly the proportion of these which are in $V_n$. 

The data immediately suggest that $D(V_n)$, and thus $D(W_n)$, is nonzero for all $n\ge 2$. They also suggest that $D(V_n)$ is increasing in $n$ and has a limit in $n$ which is less than 1. The dotted lines on the chart in Table \ref{data} indicate some values which will appear in our results in the following sections. 

\section{Wagner's classes of Wecken homomorphisms for the case $n=2$}

Wagner identifies three classes of Wecken maps on the pants surface (where $n=2$) which she calls $T_2$, $T_4$, and $T_5$. We will discuss $T_2$, which we will subdivide into $T_{2a}$ and $T_{2b}$, and $T_4$. The class $T_5$ is not of interest to us since it has density zero (all maps in $T_5$ do not have remnant). 
The classes $T_2$ and $T_4$ consist of maps whose induced homomorphisms have remnant and are ``simple'', where simplicity is defined as follows.

\begin{defn}
For two words $x$ and $y$ with $x \neq y$, let $M(x,y)$ be the possibly trivial maximal initial subword of $x$ that cancels in $y^{-1}x$. Equivalently, $M(x, y)$ is the maximal initial subword that $x$ and $y$ share. 

A set $S$ is \emph{simple} if there exists a word $U$ such that for all $x, y$ in $S$ with $x \neq y$ we have either $M(x, y)=1$ or $M(x, y)=U$. A homomorphism $\phi: \langle a_1, \dots, a_n \rangle \to \langle a_1, \dots, a_n \rangle$ is \emph{simple} if the set $S_{\phi}= \{ \phi(a_1), \phi(a_1)^{-1}, \dots, \phi(a_n), \phi(a_n)^{-1} \}$ is simple. 
\end{defn}

For the rest of this section we will focus on the case where $n=2$, and we write our group as $G= \langle a, b \rangle$. 

Wagner's class $T_2$ can be split into two subclasses. We say that a homomorphism $\phi$ is $T_{2a}$ if $M(x,y)$ is trivial for all $x,y \in S_\phi$.
Let $s_a$  equal the first letter (generator or inverse of a generator) of $\phi(a)$ and $l_a$ be the inverse of the last letter of $\phi(a)$, and similarly define
$s_b$ and $l_b$. 
Then Type $T_{2a}$ occurs exactly when $s_a, l_a, s_b, l_b$ are four distinct letters:
since our group has only four distinct letters, 
we must have $\{ s_a, l_a, s_b, l_b \}= \{a, a^{-1}, b, b^{-1} \}$. 

\begin{lem} $D(T_{2a}) \geq \frac{2}{27}$ \end{lem}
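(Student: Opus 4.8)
The plan is to exploit the characterization just obtained: $\phi\in T_{2a}$ precisely when $(s_a,l_a,s_b,l_b)$ is a permutation of $(a,a^{-1},b,b^{-1})$. There are $4!=24$ such permutations, and each one prescribes exactly the first letter and the last letter of $\phi(a)$ (namely $s_a$ and $l_a^{-1}$) and of $\phi(b)$ (namely $s_b$ and $l_b^{-1}$). Distinct permutations prescribe distinct first/last-letter data, so the $24$ corresponding sets of pairs $(\phi(a),\phi(b))$ are pairwise disjoint, and $T_{2a}$ is exactly their union. I would therefore first reduce to a single pattern: bound below the density, inside $G_p\times G_p$, of those pairs for which $\phi(a)$ begins with a prescribed letter $u$ and ends with a prescribed letter $v$ while $\phi(b)$ likewise has prescribed first and last letters. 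Since the two coordinates of $G_p^2$ are chosen independently, this density factors as a product of two single-word densities. (If one builds the remnant hypothesis into the definition of $T_{2a}$, one simply notes that intersecting with $R_2$ changes no densities, since $R_2$ is generic by Brown's theorem.)

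The key estimate is an elementary lower bound for the number of reduced words of length exactly $p$ that begin with a fixed letter $u$ and end with a fixed letter $v$. I would construct such a word $\ell_1\cdots\ell_p$ by taking $\ell_1=u$; letting $\ell_2,\dots,\ell_{p-2}$ range over all reduced continuations, which gives $3^{p-3}$ choices; choosing $\ell_{p-1}$ to avoid both $\ell_{p-2}^{-1}$ and $v^{-1}$, which leaves at least $2n-2=2$ admissible letters; and finally setting $\ell_p=v$. This produces at least $2\cdot 3^{p-3}$ such words, out of $|\overline{G_p}|=4\cdot 3^{p-1}$ reduced words of length $p$, that is, a proportion of at least $\frac{1}{18}$. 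Summing over lengths $k\ge 3$, the set of words in $G_p$ beginning with $u$ and ending with $v$ has size at least $\sum_{k=3}^{p}2\cdot 3^{k-3}=3^{p-2}-1$, and hence density at least $\frac{3^{p-2}-1}{2\cdot 3^p-1}$ inside $G_p$, which tends to $\frac{1}{18}$.

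Assembling these, each of the $24$ patterns contributes at least $(3^{p-2}-1)^2$ pairs to $T_{2a}\cap G_p^2$, so by disjointness
\[
\frac{|T_{2a}\cap G_p^2|}{|G_p|^2}\ \ge\ \frac{24\,(3^{p-2}-1)^2}{(2\cdot 3^p-1)^2}.
\]
The right-hand side converges to $24\,(1/18)^2=24/324=2/27$ as $p\to\infty$, which gives $D(T_{2a})\ge\frac{2}{27}$. The only steps requiring care are the word count — in particular that the penultimate letter $\ell_{p-1}$, constrained from the left by $\ell_{p-2}$ and from the right by the prescribed final letter $v$, still has at least $2n-2$ legal values — and the harmless point that words of length $\le 2$ do not affect the limiting densities. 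I would also note that $\frac{1}{18}$ is a wasteful bound: a sharper count shows the proportion of words in $G_p$ with prescribed first and last letters actually tends to $\frac{1}{16}$, whence in fact $D(T_{2a})=\frac{3}{32}$, but the crude estimate is all that is needed here.
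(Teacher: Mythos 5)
Your proposal is correct and follows essentially the same route as the paper's proof: partition $T_{2a}$ into the $4!=24$ disjoint first/last-letter patterns, count words in $G_p$ with both endpoint letters prescribed by allowing $3$ choices for the interior letters and at least $2$ for the penultimate one (giving $3^{p-2}-1$ per coordinate), and pass to the limit to obtain $24/18^2 = 2/27$. Your closing observation that a sharper count gives a $\tfrac{1}{16}$ limiting proportion of words with prescribed first and last letters, and hence $D(T_{2a}) = \tfrac{3}{32}$, is a correct refinement beyond what the paper records.
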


\begin{proof}
We will count the number of ways to construct a map $\phi \in G_p^2 \cap T_{2a}$.

First determine the assignment of $\{ s_a, l_a, s_b, l_b\}$ to $ \{a, a^{-1}, b, b^{-1} \}$, in one of $4!=24$ ways. 
Then we have to determine the inner letters of $\phi(a)$ and $\phi(b)$ in one of at least
\begin{align*}
\left( \sum_{q=3}^{p} (3^{q-3})2 \right) ^2 = \left( 3^{p-2}-1\right)^2 
\end{align*}
ways: each of the inner letters can be anything except for the inverse of the previous (thus 3 choices for each), and the last letter has possibly only 2 choices since it additionally cannot be the inverse of the already chosen final letter.

So we have 
\[
 D(T_{2a}) \geq \lim_{p \to \infty} 24 \frac{\left( 3^{p-2}-1\right)^2}{ \left(\frac{2(3^{p})-1}{1}\right)^2}
= \lim_{p \to \infty} 24 \frac{3^{2p-4}}{4(3^{2p})}= \frac{24}{4(3^4)}= \frac{2}{27}
\qedhere
\]
\end{proof} 

Wagner's class $T_2$ also includes a subclass which we call $T_{2b}$. A homomorphism $\phi$ is $T_{2b}$ when we can write $\phi(a)= U X U^{-1}$ for some nontrivial $U$ and $\phi(b)$ is nontrivial such that $s_b \neq l_b$ and $s_a \not\in \{s_b, l_b\}$.
 
\begin{lem}
$
D(T_{2b}) \geq \frac{1}{24}
$
\end{lem}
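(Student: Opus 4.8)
The plan is to recast membership in $T_{2b}$ as a combinatorial condition on the reduced words $\phi(a)$ and $\phi(b)$, and then to count directly, in the spirit of the proof of the bound for $T_{2a}$.

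First I would unwind the definition. Writing $\phi(a)=UXU^{-1}$ as a reduced word with $U$ nontrivial forces the first letter of $\phi(a)$ to be the inverse of its last letter, i.e.\ $s_a=l_a$; conversely, any reduced $\phi(a)$ with $s_a=l_a$ (necessarily of length at least $3$) has such a decomposition with $U$ a single letter. Hence $\phi\in T_{2b}$ exactly when $\phi(a)$ is reduced with $s_a=l_a$, $\phi(b)$ is nontrivial with $s_b\neq l_b$, and $s_a\notin\{s_b,l_b\}$ --- together with the remnant hypothesis, which is generic and so may be intersected in without changing the density. I would also record the consequence that $s_a,s_b,l_b$ are then three distinct letters, and since the first letters of $\phi(a),\phi(a)^{-1},\phi(b),\phi(b)^{-1}$ are $s_a,s_a,s_b,l_b$, the only pair of words in $S_\phi$ sharing a nontrivial initial segment is $\{\phi(a),\phi(a)^{-1}\}$; thus $S_\phi$ is automatically simple and no hidden condition intervenes.

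Next I would bound $|T_{2b}\cap G_p^2|$ from below by a direct enumeration. First choose the boundary data: the first letter $s_a$ of $\phi(a)$, which pins its last letter as $s_a^{-1}$ ($4$ choices); the first letter $s_b\neq s_a$ of $\phi(b)$ ($3$ choices); and $l_b\notin\{s_a,s_b\}$, i.e.\ the last letter of $\phi(b)$ ($2$ choices) --- giving $4\cdot3\cdot2=24$ configurations in all. Then choose the interior letters of $\phi(a)$ and of $\phi(b)$ independently. Writing $\phi(a)=s_a\,X\,s_a^{-1}$ of length $q$, the middle $X$ ranges over reduced words of length $q-2$ that neither begin with $s_a^{-1}$ nor end with $s_a$; for each $q$ there are at least a fixed positive fraction of $3^{q-2}$ such words (the inner letters each admitting $3$ values, the two extreme letters at least a bounded number), so summing the resulting geometric series over $q$ from $3$ to $p$ gives at least a constant multiple of $3^p$ choices for $\phi(a)$. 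A parallel geometric sum bounds below the choices for $\phi(b)$ of length between $2$ and $p$ (words of length $1$ form a vanishing fraction and may be discarded). Multiplying the $24$ boundary configurations by these two sums, dividing by $|G_p^2|=|G_p|^2=(2\cdot 3^p-1)^2$, and letting $p\to\infty$ so the lower-order terms wash out, yields the stated bound $D(T_{2b})\geq\frac{1}{24}$.

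The step I expect to be the main obstacle is the count of admissible middles $X$ for $\phi(a)=s_a\,X\,s_a^{-1}$: because the conjugate form links the first and last letters of $\phi(a)$, the genuinely free positions are $2,\dots,q-1$, and the constraint at the inner end of $X$ (it cannot equal $s_a$, and it must differ from the inverse of its predecessor) must be handled carefully so that the constant multiplying $3^{q-2}$ is not eroded below what the target $\frac{1}{24}$ can absorb. There is slack --- only a clean lower bound on these word counts is needed, not their exact values --- so the real work is bookkeeping: arranging the enumeration so that the contributions of $\phi(a)$, of $\phi(b)$, and of the boundary choices multiply out correctly, and verifying that each enumerated pair genuinely lies in $T_{2b}$, which is routine given the distinctness of $s_a,s_b,l_b$ noted above and the genericity of the remnant condition.
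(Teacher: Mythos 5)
Your proposal is correct, but it takes a noticeably different (and in fact more economical) route than the paper. The paper's proof decomposes $\phi(a)=UXU^{-1}$ with $U$ chosen maximally, then sums over both $v=|U|$ and $r=|X|$, which requires the maximality caveat to avoid double-counting and produces a double geometric sum. You instead observe that the $T_{2b}$ condition on $\phi(a)$ is equivalent to the single-letter condition $s_a=l_a$, and you peel off only the first and last letter, writing $\phi(a)=s_a X s_a^{-1}$ with $|s_a|=1$. This collapses the double sum to a single one, eliminates the maximality bookkeeping, and makes the $T_{2b}$ count structurally parallel to the $T_{2a}$ count. Your recasting of the boundary data is also cleanly handled: $s_a,s_b,l_b$ distinct with $l_a=s_a$ gives exactly $4\cdot 3\cdot 2=24$ configurations, and your remark that this forces simplicity of $S_\phi$ (only $\{\phi(a),\phi(a)^{-1}\}$ can share a nontrivial prefix) is a useful sanity check that the paper omits.

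One thing worth flagging: if you actually carry out the arithmetic you wave at, your count gives a \emph{stronger} bound than the one you are asked to prove. With $|X|=m=q-2$ you get at least $2\cdot 3^{m-1}$ admissible middles, so $\sum_{m\ge 1} 2\cdot 3^{m-1}\approx 3^{p-2}$ choices for $\phi(a)$, and the same order of magnitude for the interior of $\phi(b)$; multiplying by the $24$ boundary configurations and dividing by $(2\cdot 3^p-1)^2$ gives a limit of $\tfrac{24}{4\cdot 81}=\tfrac{2}{27}>\tfrac{1}{24}$, matching the $T_{2a}$ bound. The paper's double-sum formulation loses a constant factor and lands on the weaker $\tfrac{1}{24}$. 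So your ``slack'' comment is right, and the part you leave as bookkeeping does in fact close; but you should go ahead and pin down the constants rather than only asserting they are large enough, since that's the whole content of the lemma.
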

 
\begin{proof}
For $\phi$ in $T_{2b} \cap G_p^2$, let  $v= |U|$, and let $r= |X| = |\phi(a)|-2v$. (We assume that $U$ and $X$ are chosen so that $U$ is maximal when we write $\phi(a) = UXU^{-1}$, which is equivalent to assuming that the last letter of $X$ is not the inverse of the first letter.) We will count the number of $\phi$ in $T_{2b} \cap G_p^2$ with a particular $v$ and $r$.  
There are $4(3^{v-1})$ ways to select $U$, which can be any word of length $v$. 

For each of the first $r-1$ letters of $X$, we can choose any letter but the inverse of the immediately preceding letter. For the last letter of $X$, we cannot choose the inverse of the immediately preceding letter, the inverse of the immediately following letter, or the inverse of the first letter of $X$. There will be perhaps only one choice remaining. Thus there are at least $3^{r-1}$ ways to determine $X$.

Summing over $v$ and $r$ there are 
\begin{align*}
\sum_{v=1}^{\frac{p-2}{2}} 4(3^{v-1})& \sum_{r=1}^{p-2v} 3^{r-1}
= \frac{ 3^p -4(3^{p/2}-1)-1}{3}
\end{align*}
 ways to determine $\phi(a)$. 
 Then we must determine $\phi(b)$, in one of 
 \[
 \sum_{q=2}^{p} 3(3^{q-2}) =\frac{3}{2} \left(3^{p-1}-1 \right) 
 \]
 ways, since for the first letter we cannot choose $s_a$ and for the last letter we cannot choose $s_a^{-1}$, $s_b^{-1}$ or the inverse of the immediately preceding letter. 
 So
\[
 D(T_{2b}) \geq
 \lim_{p \to \infty}
\frac{ \left( \frac{ 3^p -4(3^{p/2}-1)-1}{3} \right) \left( \frac{3}{2} \left(3^{p-1}-1 \right)  \right) }
{ \left( \frac{2(3)^p-1}{1} \right)^2}
= \lim_{p \to \infty} \frac{3^{2p-1}}{8(3^{2p})} = \frac{1}{24}
\qedhere
\]
 \end{proof}
 
There is an analogous class $T_{2b'}$, the set of $\phi$ with $\phi(b)= U X U^{-1}$ (so $s_b=l_b$) and $\phi(a)$ is nontrivial such that $s_a \neq l_a$ and $s_a, l_a \neq s_b$.

We can get the identical lower bound on this class: $D(T_{2b'}) \ge \frac{1}{24}$.

Wagner also identifies the set of $T_{4}$ homomorphisms, where $\phi(a)= UX_1$ and $\phi(b)= X_2 U^{-1}$ and we have $s_a \neq l_a$, $s_b \neq l_b$ and $l_a \neq s_b$. 
\begin{lem}
$ D(T_{4}) \geq \frac{1}{36} $
\end{lem}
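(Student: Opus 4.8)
The plan is to mimic the proofs of the preceding three lemmas: exhibit a convenient subfamily of $T_4\cap G_p^2$, count it, and take the limiting proportion. The one real wrinkle — absent in the $T_{2b}$ argument, where maximality of $U$ could be imposed cheaply — is overcounting: a pair $(\phi(a),\phi(b))$ may admit many decompositions $\phi(a)=UX_1$, $\phi(b)=X_2U^{-1}$ with $U$ nontrivial, and requiring $U$ to be maximal here would awkwardly couple the choices of $X_1$ and $X_2$. Instead I would restrict to $|U|=1$, writing $\phi(a)=uX_1$ and $\phi(b)=X_2u^{-1}$ with $u$ a single letter and $X_1,X_2$ nontrivial reduced words; then $u$, $X_1$, and $X_2$ are recovered from $(\phi(a),\phi(b))$ as the first letter of $\phi(a)$, the remainder of $\phi(a)$, and $\phi(b)$ with its last letter deleted. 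Hence $(u,X_1,X_2)\mapsto(\phi(a),\phi(b))$ is injective and counts each such $\phi$ exactly once. This weakens the constant but still produces $1/36$.

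For such a triple one has $s_a=u$, $l_b=u$, $l_a=(\text{last letter of }X_1)^{-1}$, and $s_b=(\text{first letter of }X_2)$, so that — together with reducedness of $\phi(a)$ and $\phi(b)$ — the defining relations $s_a\ne l_a$, $s_b\ne l_b$, $l_a\ne s_b$ of $T_4$ translate into: the first and last letters of $X_1$ avoid $u^{-1}$; the first and last letters of $X_2$ avoid $u$; and the first letter of $X_2$ avoids the inverse of the last letter of $X_1$. Every triple meeting these conditions gives an element of $T_4$ (with witness $U=u$), lying in $G_p^2$ as soon as $|X_1|,|X_2|\le p-1$.

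Next I would count. There are $4$ choices for $u$. For each $r_1\in\{1,\dots,p-1\}$ the number of admissible $X_1$ is at least $3^{r_1-1}$ (the first letter has $3$ choices, each interior letter $3$ choices, the last letter at least $2$), and for each such $X_1$ and each $r_2\in\{1,\dots,p-1\}$ the number of admissible $X_2$ is at least $3^{r_2-1}$ — here the first letter of $X_2$ must avoid both $u$ and the inverse of the last letter of $X_1$, which are genuinely distinct since that last letter is not $u^{-1}$, so at least $2$ choices remain. Summing,
\[
|T_4\cap G_p^2|\ \ge\ 4\left(\sum_{r=1}^{p-1}3^{r-1}\right)^{2}\ =\ 4\left(\frac{3^{p-1}-1}{2}\right)^{2}\ =\ (3^{p-1}-1)^2 ,
\]
so, dividing by $|G_p^2|=(2\cdot 3^{p}-1)^2$ and letting $p\to\infty$,
\[
D(T_4)\ \ge\ \lim_{p\to\infty}\frac{(3^{p-1}-1)^2}{(2\cdot 3^{p}-1)^2}\ =\ \lim_{p\to\infty}\frac{3^{2p-2}}{4\cdot 3^{2p}}\ =\ \frac{1}{36}.
\]

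The only genuine work lies in the two middle steps: verifying that the stated letter conditions are equivalent to membership in $T_4$ within this family, confirming the injectivity of the parametrization (so that the count is an honest lower bound rather than an overcount), and checking that the per-length bounds $3^{r-1}$ survive the small cases $r=1,2$, where several forbidden letters could a priori coincide. I expect this bookkeeping — not the limit evaluation — to be the main obstacle.
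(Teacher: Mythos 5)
Your proof is correct and takes a genuinely different route from the paper's. The paper's argument sums over all lengths $v = |U| \geq 1$, insists that $U$ be chosen maximally so that each $\phi \in T_4$ is counted once, and therefore must impose an extra constraint (the last letter of $X_2$ is not the inverse of the first letter of $X_1$) precisely to enforce that maximality. You instead fix $|U| = 1$, so that $u$, $X_1$, $X_2$ are recovered canonically from $(\phi(a),\phi(b))$ as the first letter of $\phi(a)$, the rest of $\phi(a)$, and $\phi(b)$ with its last letter deleted; injectivity of the parametrization is then automatic, the maximality issue disappears, and the decomposition $\phi(a)=uX_1$, $\phi(b)=X_2u^{-1}$ serves as an explicit witness for $T_4$-membership. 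Your translation of the $T_4$ conditions plus reducedness into five letter constraints on $X_1,X_2$ is correct, as is the check that at least $3^{r-1}$ admissible words survive at each length $r\geq 1$, including $r=1,2$ where forbidden letters could coincide. Two small remarks. First, you have more slack than you used: the per-length counts you actually justified are at least $2\cdot 3^{r_1-1}$ for $X_1$ and $4\cdot 3^{r_2-2}$ for $X_2$ once $r\geq 2$, and carrying these through gives $D(T_4)\geq\frac{2}{27}$ from the same $|U|=1$ family, strictly better than the stated $\frac{1}{36}$. Second, like the paper's own proof you do not verify that the constructed homomorphisms have remnant (which the paper's prose folds into the definition of $T_4$); since remnant is a generic condition this is harmless for the density, but it is worth noting that this step is silently skipped in both arguments.
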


\begin{proof}

Let $v= |U|$, $r= |X_1|$, $s= |X_2|$. (Again we assume that $U$ is chosen maximally.)
Then we have $4(3^{v-1})$ ways to determine $U$. There are at least $\sum_{r=1}^{p-v} 2(3^{r-1})$ ways to determine $X_1$, since we must make $l_a \neq s_a$. 
There are at least $ \sum_{s=2}^{p-v}2 (3^{s-2}) (2) $ ways to determine $X_2$, since we must have $s_b \not \in \{s_a, l_b^{-1}\}$ and additionally require that the last letter of $X_2$ is not the inverse of the first letter of $X_1$, so that $U$ will be maximal. (We have assumed for simplicity that $|X_2| = s > 1$, this has no effect in the limit as $p\to \infty$.) 

We have
\begin{align*}
\sum_{v=1} ^{p-1} \left( 4(3^{v-1})
 \left( \sum_{r=1}^{p-v}2( 3^{r-1}) \right) \left( \sum_{s=2}^{p-v} 2(3^{s-2}) \right) \right) 
&= \sum_{v=1} ^{p-1} 4 (3^{v-1}) (3^{p-v} -1) (3^{p-v-1}-1)\\
&= 3^{2p-2} - 4p3^{p-2} + 16(3^{p-2}) -1 
\end{align*}
ways to determine $\phi$ in $T_{4} \cap G_p^2$, and so
\[ D(T_{4}) \geq \lim_{p \to \infty} \frac{3^{2p-2} - 4p3^{p-2} + 16(3^{p-2}) -1}{\left( \frac{2(3^p)-1}{1} \right)^2}
= \lim_{ p \to \infty} \frac{ 3^{2p-2}}{ 4(3^{2p})} = \frac{1}{36}
\qedhere \]
\end{proof}

As above, we can switch the roles of $a$ and $b$ to get a class which we call $T_{4'}$ homomorphisms, and we have $D(T_{4'}) \ge \frac{1}{18}$.

Theorem 3.2 of \cite{wagn97} shows that all maps in $T_2$ and $T_4$ are Wecken. Identical proofs show additionally that maps in $T_{2b'}$ and $T_{4'}$ are Wecken. 

Since $T_{2a}, T_{2b},T_{2b'}, T_{4},$ and $T_{4'}$ are mutually disjoint and Wecken, we have 
\[ D(W_2) \geq D(T_{2a}) + D(T_{2b}) + D(T_{2b'}) + D(T_4) + D(T_{4'}), \]
and summing the lower bounds in the lemmas above gives:
\begin{thm}\label{n=2bound}
\[D(W_2) \geq \frac{23}{108} \approx .2129 \]
\end{thm}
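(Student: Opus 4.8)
The plan is to realize $W_2$ as containing a pairwise disjoint union of the five classes treated above, and then add up their density lower bounds. The first step is to record the inclusion
\[ T_{2a}\cup T_{2b}\cup T_{2b'}\cup T_4\cup T_{4'}\ \subseteq\ W_2 , \]
which rests on two facts. First, each of the five sets consists of Wecken homomorphisms: Theorem 3.2 of \cite{wagn97} gives this for $T_2=T_{2a}\cup T_{2b}$ and for $T_4$, and the identical argument handles $T_{2b'}$ and $T_{4'}$, as remarked just above the statement. (All five classes already lie in the remnant locus $R_2$, so no genericity bookkeeping is needed here.) Second, the five classes are pairwise disjoint.

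The disjointness is the only genuinely new ingredient, and I expect it to be the main obstacle, though it reduces to a finite case analysis on the leading and trailing letters $s_a,l_a,s_b,l_b$ and on which of $\phi(a),\phi(b)$ carries a repeated or shared subword. The three ``$T_2$-type'' classes are separated as follows: if $\phi\in T_{2a}$ then every $M(x,y)$ on $S_\phi$ is trivial, so $\phi(a)$ shares no nontrivial initial segment with $\phi(a)^{-1}$, which excludes the conjugate shape $\phi(a)=UXU^{-1}$ ($U\ne 1$) required for $T_{2b}$; symmetrically $\phi\notin T_{2b'}$; and $T_{2b}$ and $T_{2b'}$ are disjoint because $T_{2b}$ forces $s_a=l_a$ together with $s_b\ne l_b$ while $T_{2b'}$ forces the reverse. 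To separate these from the two ``$T_4$-type'' classes, note that a $T_4$ homomorphism has $\phi(a)=UX_1$ and $\phi(b)=X_2U^{-1}$ with $U\ne 1$ chosen maximally; then $\phi(a)$ and $\phi(b)^{-1}$ are reduced words both beginning with $U$, so $M(\phi(a),\phi(b)^{-1})\ne 1$ and $\phi\notin T_{2a}$, while the built-in conditions $s_a\ne l_a$ and $s_b\ne l_b$ keep $\phi$ out of $T_{2b}$ and $T_{2b'}$; and $T_4\cap T_{4'}=\emptyset$ because in the intersection the two overlap decompositions force $l_a=s_b$, contradicting the condition $l_a\ne s_b$ imposed in $T_4$.

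Granting disjointness, for every $p$ we have
\[ \bigl|(T_{2a}\cup\cdots\cup T_{4'})\cap G_p^2\bigr|=\bigl|T_{2a}\cap G_p^2\bigr|+\cdots+\bigl|T_{4'}\cap G_p^2\bigr|, \]
so dividing by $|G_p^2|$ and letting $p\to\infty$ shows density is additive over this union; since $W_2$ contains it, $D(W_2)\ge D(T_{2a})+D(T_{2b})+D(T_{2b'})+D(T_4)+D(T_{4'})$. Substituting the five bounds established above (for $T_{4'}$ the relevant bound is the one obtained by interchanging $a$ and $b$ in the $T_4$ lemma, namely $D(T_{4'})\ge\tfrac1{36}$), putting everything over $216$, and simplifying gives
\[ \tfrac{2}{27}+\tfrac{1}{24}+\tfrac{1}{24}+\tfrac{1}{36}+\tfrac{1}{36}=\tfrac{16+9+9+6+6}{216}=\tfrac{46}{216}=\tfrac{23}{108}\approx .2129 . \]
If $D(W_2)$ is not known in advance to exist, the conclusion should be read as the statement that $\liminf_{p\to\infty}|W_2\cap G_p^2|/|G_p^2|\ge\tfrac{23}{108}$, which is all the displayed inequality asserts in that case.
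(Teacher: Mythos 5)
Your proposal is correct and follows essentially the same route as the paper: establish that the five classes are pairwise disjoint subsets of $W_2$ (the paper simply asserts this; you supply the case-by-case verification via leading/trailing letters and the maximality of $U$) and then sum the density lower bounds. One thing worth noting: you correctly take $D(T_{4'})\ge\frac{1}{36}$ by the $a\leftrightarrow b$ symmetry with $T_4$, which is what actually produces $\frac{2}{27}+\frac{1}{24}+\frac{1}{24}+\frac{1}{36}+\frac{1}{36}=\frac{23}{108}$; the paper's text states $D(T_{4'})\ge\frac{1}{18}$, which is inconsistent with the theorem's stated conclusion (it would give $\frac{13}{54}$), so your reading appears to be the intended one.
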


Note that this is larger than the experimental data we have for $D(V_2)$ (see Table \ref{data}). This suggests that $W_2$ is strictly larger than $V_2$, and in fact this is not hard to demonstrate:

\begin{thm}
$D(W_2) > D(V_2)$
\end{thm}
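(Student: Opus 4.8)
\section*{Proof proposal for \texorpdfstring{$D(W_2) > D(V_2)$}{D(W2) > D(V2)}}

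The plan is to produce a set $A$ of Wecken homomorphisms which has \emph{positive} density and is \emph{disjoint} from $V_2$; then, since $V_2\cap R_2\subseteq W_2$ (Lemma \ref{differenttails}) and $D(V_2\cap R_2)=D(V_2)$ (as $R_2$ is generic), the disjoint union $(V_2\cap R_2)\sqcup A$ lies in $W_2$ and gives $D(W_2)\ge D(V_2)+D(A)>D(V_2)$. I would take $A$ to be the subset of Wagner's class $T_{2b}$ consisting of those $\phi$ for which, writing $\phi(a)=UXU^{-1}$ with $U$ nontrivial and maximal, the word $U$ \emph{begins with the generator $a$}. For such $\phi$ the reduced form of $\phi(a)$ is $a\cdots a^{-1}$, so $s_a=a$; the remaining $T_{2b}$ requirements become the conditions that $\phi(b)$ is nontrivial with $s_b\neq l_b$ and $a\notin\{s_b,l_b\}$, all of which are ``positive-proportion'' constraints.

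Three verifications are needed. \emph{First,} $A\subseteq W_2$: this is immediate since $A\subseteq T_{2b}\subseteq T_2$, and Theorem 3.2 of \cite{wagn97} (quoted above) says every map in $T_2$ is Wecken. \emph{Second,} $A\cap V_2=\emptyset$: if $\phi\in A$ then $\phi(a)$ is reduced and begins with $a$, so the occurrence of that leading $a$ has $v=1$ and $\epsilon=+1$ in the notation for Wagner tails, hence contributes the tail $w=1$; this coincides with the tails $w=\overline w=1$ of the wedge point $x_0$, so $\phi$ has a non-wedge fixed point whose Wagner tail is $1$ and therefore its Wagner tails are \emph{not} all distinct apart from the wedge point's $1$, i.e. $\phi\notin V_2$. \emph{Third,} $D(A)>0$: one counts $A\cap G_p^2$ exactly as in the proof of the lemma bounding $D(T_{2b})$ — write $\phi(a)=aU'X(U')^{-1}a^{-1}$, sum over $|U'|$ and $|X|$ as there, and multiply by the number of admissible $\phi(b)$ (those avoiding $s_a=a$ as first letter and the relevant inverses as last letter) — and checks that the limiting proportion is strictly positive; any crude positive lower bound suffices, and the computation is structurally identical to the one already carried out.

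Finally, since $A$ and $V_2\cap R_2$ are disjoint subsets of $W_2$, for every $p$ we have $|W_2\cap G_p^2|\ge |(V_2\cap R_2)\cap G_p^2|+|A\cap G_p^2|$; dividing by $|G_p^2|$ and passing to the limit gives $D(W_2)\ge D(V_2)+D(A)>D(V_2)$.

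The conceptual content — that a conjugated generator forces a Wagner tail equal to the wedge-point tail, so that such (still Wecken) maps are automatically outside $V_2$ — is what makes the statement ``not hard''. The only genuine labor is the density estimate for $A$ in the third step, and this is entirely parallel to the $D(T_{2b})$ computation above; in fact one could equally well use a suitable positive-density subset of $T_{2b'}$ or $T_4$ in place of $T_{2b}$ with no change to the argument.
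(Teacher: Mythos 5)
Your argument is correct and follows essentially the same strategy as the paper: exhibit a positive-density set of Wecken homomorphisms with $s_a=a$ (so that the leading occurrence of $a$ in $\phi(a)$ produces a trivial Wagner tail coinciding with the wedge point's, placing the map outside $V_2$), then invoke Wagner's Theorem~3.2 and disjointness to conclude $D(W_2)\ge D(V_2)+D(A)$. The only difference is cosmetic: the paper draws its witness set from $T_{2a}$ (requiring $\{s_a,l_a,s_b,l_b\}=\{a,a^{-1},b,b^{-1}\}$ with $s_a=a$, giving density $1/81$), while you draw it from $T_{2b}$; as you note yourself, any of Wagner's classes would do.
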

\begin{proof}
Consider the set of $\phi$ such that $s_a= a$ and $\{ l_a^{-1}, s_b, l_b^{-1} \}= \{a^{-1}, b, b^{-1} \}$. 
This set is disjoint from $V_2$, but is a subset of $T_{2a} \subset W_2$. It also has a nonzero density of $\frac{1}{81}$. 
 So $D(W_2) > D(V_2)$. 
\end{proof}
 
Wagner's proof that the classes $T_2$ and $T_4$ are Wecken involves computing $N(f)$ with Wagner's algorithm,  computing $\MF(f)$ with an algorithm of Kelly in \cite{kell87}, and observing that $N(f)=\MF(f)$ for $f$ such that $\phi \in T_2 \cup T_4$. Kelly's algorithm for $\MF(f)$ is specific to the $n=2$ setting and has not been extended to $n>2$. Thus we will require totally different methods for general $n$. 

\section{A proof that $D(W_n) > 0$ for general $n$}\label{der}

In this section we show that $D(W_n)>0$ by showing that $D(V_n)>0$ for $n>2$.
Let $G = \left\langle a_1, \ldots, a_n \right\rangle$ and $\phi: G \to G$ be an endomorphism. If $x$ is a fixed point, we denote the corresponding Wagner tails $w$ and $\overline{w}$ by $w_{x}$ and $\overline{w}_{x}$. We also denote the first letter of $\phi(a_i)$ as $s_{i}$ and the inverse of the last letter of $\phi(a_i)$ as $l_{i}$. In general, we will denote the length of the reduced form of a word $w$ as $\left| w \right|$. 

\begin{lem}
\label{dirrel}
If $T_{\phi} = \left\{s_{1}, s_{2}, \ldots, s_{n}, l_{1}, l_{2}, \ldots, l_{n}\right\}$ contains no repeated elements and $a_{i} \notin \left\{s_{i}, l_{i}^{-1}\right\}$ for all $i$, then there are no direct relations among the Wagner tails of $\phi$.
\end{lem}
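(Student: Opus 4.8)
\emph{Proof proposal.} The plan is to show directly that for any two distinct fixed points $x\neq y$ the tail-sets $\{w_x,\overline{w}_x\}$ and $\{w_y,\overline{w}_y\}$ are disjoint, which is exactly the statement that no two fixed points are directly related. The engine of the argument is that the hypotheses pin down the \emph{first letter} of every Wagner tail: I claim that for each occurrence of $a_i^{\epsilon}$ in $\phi(a_i)$, the tails $w$ and $\overline{w}$ it produces are both nontrivial, with $w$ beginning with $s_i$ and $\overline{w}$ beginning with $l_i$. Granting this, the wedge point (whose tails are both $1$) is directly related to no other fixed point since all other tails are nontrivial; and for two non-wedge fixed points a common tail would have to begin simultaneously with two of the $2n$ letters $s_1,\dots,s_n,l_1,\dots,l_n$, which are pairwise distinct by hypothesis, so the only surviving possibility is that the common tail comes from two \emph{different} occurrences inside a single $\phi(a_i)$ --- and this I rule out by a short length/prefix count.

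To establish the first-letter claim I would run through the cases $\epsilon=\pm 1$ in the definition of $w$ and $\overline{w}$, using throughout that $\phi(a_i)=v a_i^{\epsilon}\overline{v}$ is a reduced product (so that $v$, $\overline{v}$, and in turn $w$ and $\overline{w}$, are all reduced words). If $\epsilon=1$ then $w=v$, and the hypothesis $s_i\neq a_i$ forces $v\neq 1$ (otherwise $\phi(a_i)$ would begin with $a_i$), so $w\neq 1$ and its first letter is the first letter of $\phi(a_i)$, namely $s_i$. If $\epsilon=-1$ then $w=v a_i^{-1}$ is visibly nontrivial, with first letter that of $v$ when $v\neq 1$, and equal to $a_i^{-1}=s_i$ when $v=1$. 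The analysis of $\overline{w}$ is the mirror image, with the hypothesis $l_i^{-1}\neq a_i$ --- i.e.\ the last letter of $\phi(a_i)$ is not $a_i$ --- playing the role that $s_i\neq a_i$ played for $w$ and forcing $\overline{v}\neq 1$ when $\epsilon=1$; in every case $\overline{w}$ is nontrivial and begins with the inverse of the last letter of $\phi(a_i)$, which is $l_i$. I expect this case bookkeeping --- tracking the degenerate subcases $v=1$ and $\overline{v}=1$, and keeping the use of reducedness honest --- to be the fussiest part of the write-up, though it is entirely routine.

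Finally, given the first-letter claim, let $x\neq y$ be fixed points. If one of them is the wedge point, its tails are both $1$ while the other's are all nontrivial, so the tail-sets are disjoint. Otherwise $x$ comes from an occurrence in some $\phi(a_i)$ and $y$ from one in some $\phi(a_j)$. A $w$-tail begins with an ``$s$'' and a $\overline{w}$-tail with an ``$l$'', and all $2n$ of these letters are distinct, so a common tail forces $i=j$ and forces it to be a $w$-tail of both $x$ and $y$, or a $\overline{w}$-tail of both. It therefore suffices to check that two distinct occurrences of $a_i^{\pm1}$ in $\phi(a_i)$ give distinct $w$-tails and distinct $\overline{w}$-tails. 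For this, note that the $w$-tail of the occurrence at position $p$ of the reduced word $\phi(a_i)$ is the prefix of $\phi(a_i)$ of length $p-1$ (when $\epsilon=1$) or $p$ (when $\epsilon=-1$), while each $\overline{w}$-tail is a prefix of $\phi(a_i)^{-1}$ whose length is likewise determined by its position. For occurrences at positions $p<p'$ these prefixes have different lengths: the lengths can coincide only when $p'=p+1$ with exponent pattern $(-1,+1)$ for the $w$-tails, or $(+1,-1)$ for the $\overline{w}$-tails, and either pattern contradicts reducedness of $\phi(a_i)$ at two consecutive letters. Hence the tail-sets of $x$ and $y$ are disjoint in all cases, and there are no direct relations among the Wagner tails of $\phi$.
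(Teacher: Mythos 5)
Your proof is correct and follows essentially the same strategy as the paper: establish that each non-wedge tail is nontrivial with first letter $s_i$ (for $w$) or $l_i$ (for $\overline{w}$), use distinctness of $\{s_1,\dots,s_n,l_1,\dots,l_n\}$ to rule out cross-equalities and the case $i\neq j$, and use a length/prefix comparison to rule out equal $w$- or $\overline{w}$-tails within a single $\phi(a_i)$. The only difference is that you spell out the prefix-length justification that the paper leaves as the assertion $|w_x|<|w_y|$, which is a welcome bit of extra care.
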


\begin{proof}
Let $x$ be a fixed point arising from an occurrence $a_i^{\pm1}$ in  $\phi(a_i)$. Then we can write $\phi(a_i)=v_{x}x\overline{v}_x$, where $x=a_i^{\pm1}$. 
If $v_x$ is empty, then we must have $s_i= a_i^{-1}=x$. Then $w_x=a_i^{-1}$ is non-trivial.
Similarly, if $\overline{v_x}$ is empty, we have $\overline{w_x}=a_i.$
For $v_x$ or $\overline{v}_x$ non trivial,  $w_x$ and $\overline{w}_x$ must also be not trivial.
So  $x$ is not directly related to the base point. 

To show that $x$ is not directly related to any other fixed point, let $y \neq x$ be a fixed point arising from an occurrence of  $a_j^{\pm1}$ in $\phi(a_{j})$. 

If $i = j$, assume without loss of generality that $x$ comes before $y$ in the word $\phi(a_{i})$. Then $\left| w_{x} \right| < \left| w_{y} \right|$ and $\left| \overline{w}_{x} \right| > \left| \overline{w}_{y} \right|$.  Since $w_x$ begins with $s_i$ and $\overline{w}_y$ begins with $l_i \neq s_i$, $w_x \neq \overline{w}_y$ and, by a similar argument $\overline{w}_x \neq w_y$. Hence, the Wagner tails of $x$ and $y$ are distinct, and so $x$ and $y$ are not directly related.

If $i \neq j$, we have that the first letters of $w_{x}$, $w_{y}$, $\overline{w}_{x}$, and $\overline{w}_{y}$ are $s_{i}$, $s_{j}$, $l_{i}$, and $l_{j}$, respectively. By construction, all four letters are distinct, and so $\{w_x, \overline{w}_x\} \cap \{w_y, \overline{w}_y\} = \emptyset$. Therefore $x$ and $y$ are not directly related.
\end{proof}

Our lower bound for $D(V_n)$ involves the number of derangements on $n$ elements. Recall that a derangement is a fixed point free rearrangement of a set, and that the number of derangements on a set of $n$ elements is given by the formula $!n = n! \sum_{i=0}^{n} \frac{(-1)^i}{i!}$. Asymptotically we have $!n\approx \frac{n!}{e}$ for large $n$.

\begin{thm}\label{derthm}

$
D(W_n) \geq D(V_n) > 0.
$
More precisely,
$$
D(V_n) \geq c_n = 2^n \left(!n\right)^{2} \left(\frac{n - 1}{n \left(2n - 1\right)^{2}}\right)^{n},
$$
where $!n$ is the number of derangements on $n$ elements.  
\end{thm}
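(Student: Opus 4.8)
The plan is to exhibit, for each choice of a sign vector $\epsilon\in\{+1,-1\}^n$ and each pair of derangements $\sigma,\tau$ of $\{1,\dots,n\}$, an explicit family of endomorphisms lying in $V_n$, and then to count these families. Fix such $\epsilon,\sigma,\tau$ and let $A(\epsilon,\sigma,\tau)$ be the set of endomorphisms $\phi$ for which, for every $i$, the reduced word $\phi(a_i)$ begins with the letter $a_{\sigma(i)}^{\epsilon_{\sigma(i)}}$ and ends with the letter $a_{\tau(i)}^{\epsilon_{\tau(i)}}$. The first step is to check that every $\phi\in A(\epsilon,\sigma,\tau)$ satisfies the hypotheses of Lemma \ref{dirrel}. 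As $i$ varies, the first letters $s_i=a_{\sigma(i)}^{\epsilon_{\sigma(i)}}$ run over the transversal $\{a_j^{\epsilon_j}:1\le j\le n\}$ (one letter from each generator pair), and since $l_i$ is the inverse of the last letter of $\phi(a_i)$, the $l_i=a_{\tau(i)}^{-\epsilon_{\tau(i)}}$ run over the complementary transversal $\{a_j^{-\epsilon_j}:1\le j\le n\}$; together these give all $2n$ letters with no repetition, so $T_\phi$ has no repeated element. Moreover $\sigma(i)\ne i$ and $\tau(i)\ne i$, so both $s_i$ and the last letter of $\phi(a_i)$ have underlying generator different from $a_i$, which gives $a_i\notin\{s_i,l_i^{-1}\}$. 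Hence Lemma \ref{dirrel} applies and there are no direct relations among the Wagner tails of $\phi$; since in addition $s_i\ne l_i$ forces the two tails at any non-basepoint fixed point to begin with different letters (and, by the argument in the proof of Lemma \ref{dirrel}, to be nontrivial), every Wagner tail of $\phi$ is distinct apart from the repeated $1$ at the wedge point, i.e.\ $\phi\in V_n$.

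The second step is a routine count. For fixed $p$, an element of $A(\epsilon,\sigma,\tau)\cap G_p^n$ is obtained by choosing each $\phi(a_i)$ independently to be a reduced word of length at most $p$ with prescribed first and last letter. A reduced word of length $q\ge 3$ with prescribed first and last letters has at least $(2n-1)^{q-3}(2n-2)$ choices for its interior: each interior letter avoids the inverse of its predecessor ($2n-1$ options), except that the final interior letter must also avoid the inverse of the prescribed last letter, leaving at least $2n-2$ options. Summing over $3\le q\le p$ gives at least $(2n-1)^{p-2}-1$ such words, so $|A(\epsilon,\sigma,\tau)\cap G_p^n|\ge\big((2n-1)^{p-2}-1\big)^n$.

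The third step assembles the bound. Distinct triples $(\epsilon,\sigma,\tau)$ yield disjoint sets $A(\epsilon,\sigma,\tau)$, since one recovers $\epsilon$ from which letter of each generator pair occurs among the $s_i$, and then $\sigma$ and $\tau$ from the first-letter and last-letter data; there are $2^n(!n)^2$ such triples. Therefore
\[ |V_n\cap G_p^n|\;\ge\;2^n(!n)^2\big((2n-1)^{p-2}-1\big)^n. \]
Dividing by $|G_p^n|=|G_p|^n$ with $|G_p|=\frac{n(2n-1)^p-1}{n-1}$ and letting $p\to\infty$, each of the $n$ identical factors satisfies $\frac{(2n-1)^{p-2}-1}{|G_p|}\to\frac{n-1}{n(2n-1)^2}$, so $D(V_n)\ge 2^n(!n)^2\left(\frac{n-1}{n(2n-1)^2}\right)^n=c_n$, which is positive since $!n\ge1$ for $n\ge2$. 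Finally, $D(W_n)\ge D(V_n)$ is exactly the consequence of Lemma \ref{differenttails} and the genericity of $R_n$ recorded after that lemma.

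The genuinely delicate point is the design of the parametrizing family in the first step: it must simultaneously guarantee the hypotheses of Lemma \ref{dirrel} for every member, be easy to enumerate, and have the property that different parameters give honestly disjoint sets of homomorphisms, so that the counts add rather than merely being compared. Tying both the start-letter transversal and the end-letter transversal to a single sign vector $\epsilon$ is what makes $T_\phi$ automatically consist of all $2n$ letters, and using derangements $\sigma,\tau$ is what makes $a_i\notin\{s_i,l_i^{-1}\}$ automatic; everything else is mechanical. The resulting $c_n$ is deliberately crude — it ignores, for instance, the many valid configurations in which the start letters do not form a transversal — which is consistent with its being far below the experimental values and below the later bound of Theorem \ref{bestbound}.
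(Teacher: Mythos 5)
Your proof is correct, and its broad architecture is the one the paper uses: index a large subfamily of $V_n$ by a sign vector and a pair of derangements, verify membership in $V_n$ via Lemma \ref{dirrel}, count reduced words with prescribed first and last letters and free interior, and pass to the limit. But the subfamily you choose is not the same as the paper's set $A_n$, and your choice is the one that actually makes Lemma \ref{dirrel} applicable. You prescribe that the \emph{last letter} of $\phi(a_i)$ lies in the transversal $\{a_j^{\epsilon_j}\}$, so that the $l_i$ (being inverses of last letters) exhaust the complementary transversal $\{a_j^{-\epsilon_j}\}$; consequently $s_1,\dots,s_n,l_1,\dots,l_n$ are all $2n$ letters with no repetition, exactly as the hypothesis of Lemma \ref{dirrel} requires. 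The paper instead imposes $S_\phi = L_\phi = \{a_j^{\epsilon_j}\}$, so the $l_i$ re-use the same transversal as the $s_i$; then the list $s_1,\dots,s_n,l_1,\dots,l_n$ contains each transversal letter twice, the hypothesis of Lemma \ref{dirrel} fails, and in fact the claimed inclusion $A_n\subseteq V_n$ is false. For instance with $n=3$, $\epsilon=(+,+,+)$, and $\sigma,\tau$ the two $3$-cycles of $\{1,2,3\}$, the map $\phi(a_1)=a_2a_1a_3^{-1}$, $\phi(a_2)=a_3a_2a_1^{-1}$, $\phi(a_3)=a_1a_3a_2^{-1}$ lies in the paper's $A_3$, yet the Wagner tails of its $\phi(a_1)$- and $\phi(a_2)$-fixed points are $\{a_2,a_3\}$ and $\{a_3,a_1\}$, which share $a_3$, so those fixed points are directly related and $\phi\notin V_3$. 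Your construction avoids these $s_i=l_j$ collisions while giving the identical count $2^n(!n)^2\bigl((2n-1)^{p-2}-1\bigr)^n$ and the same limiting bound $c_n$, and your disjointness argument, interior count, and limit computation are all sound. In short, your write-up is not merely correct but quietly repairs a gap in the paper's own argument for this theorem.
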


\begin{proof}
For a given homomorphism  $ \phi: G \to G$, let $S_{\phi} = \{ s_1, s_2, \dots s_n \}$ and $L_{\phi}= \{ l_1, l_2, \dots l_n \}$.
Let $A_n$ be the set of all $\phi$ such that
$S_{\phi}=L_{\phi}=  \{ a_1^{\epsilon_1}, a_2^{\epsilon_2}, \dots a_n^{\epsilon_n}\}$ where $\epsilon_i = \pm 1$ and $\{s_i, l_i\} \cap \{a_i, a_i^{-1}\} = \emptyset.$
Then, by Lemma \ref{dirrel}, no $ \phi $ in $A_n$ has direct relationships among its Wagner tails, and so $A_n \subseteq V_n$ and $D(A_n) \leq D(V_n)$. 

To find the density of $A_n$, we must first find a lower bound for $|G_p^n \cap A_n|$. We will count the number of choices in a construction of $\phi$ in $A_n$.

First choose the set $\{ \epsilon_1, \epsilon_2, \dots, \epsilon_n\}$. This corresponds to choosing, for each $i=1, 2, \dots ,n$, whether $a_i$ or  $a_i^{-1}$ will appear in $S_{\phi}$. There are $2^n$ ways to make this choice. 

Then assign the set $\{ a_1^{\epsilon_1}, a_2^{\epsilon_2}, \dots a_n^{\epsilon_n}\}$ to $\{s_1, s_2, \dots, s_n\}$ such that $s_i \neq a_i^{\epsilon_i}$. Each of these assignments is a derangement on $\{1,2,\dots, n\}$, and so there are $!n$ ways. 
We similarly assign $\{ a_1^{\epsilon_1}, a_2^{\epsilon_2}, \dots a_n^{\epsilon_n}\}$ to $\{l_1,l_2, \dots, l_n\}$ such that $l_i \neq a_i^{\epsilon_i}$ in $!n$ ways. 

For each $i$, let $p_i=|\phi(a_i)|$.
 If $p_i=2$, then since we have determined the first ($s_i$) and last ($l_i$) letters of $\phi(a_i)$, we have no further choices. For $p_i > 3$, we have at least $(2n-1)^{p_i - 3}(2n-2)$ ways of filling the interior: for the second through $(p_i - 2)^{\text{th}}$ letters, there are $2n-1$ choices (any letter but the inverse of the one immediately before), and for the $(p_i - 1)^{\text{th}}$ letter, we have at least $2n-2$ choices (in the worst case, we can choose neither the inverse of the previous letter nor the inverse of the last letter).
 
 So, assuming $p\geq 2$ and  letting $p_i$ range over $2, 3, \dots,p$, we have at least
 
$$
1 + \sum_{p_i=3}^{p} (2n-2)(2n-1)^{p_i - 3} = (2n-1)^{p-2}
$$
 ways to fill in the interior of $\phi(a_i)$. 

Therefore, for $p \geq 2$ we have
$$
|G_p^n \cap A_n| \geq 2^n(!n)^2((2n-1)^{p-2})^n,
$$
and
\begin{align*}
D(A_n)&= \lim_{p\to\infty} \frac{|G_p^n \cap A_n|}{|G_p^n|} \geq \lim_{p\to\infty} \frac{2^n(!n)^2((2n-1)^{p-2})^n}{(\frac{n(2n-1)^p-1}{n-1})^n} \\
&=2^n  (!n)^2\lim_{p\to\infty} \left(\frac{(2n-1)^{p-2}(n-1)}{n(2n-1)^{p} -1}\right)^n =2^n (!n)^2 \left(\frac{n-1}{n(2n-1)^2}\right)^n 
\end{align*}
\end{proof}

The above lower bound $c_n$ gives very small values which decrease in $n$. They are approximated as:
\[ \begin{tabular}{c | c c c c c c c c }
$n$ & 2 & 3 & 5 & 10 & 20 & 50 \\
\hline
$c_n \text{ (approx.)} $ & $10^{-2}$ & $10^{-4}$ & $10^{-6}$ & $10^{-11}$ & $10^{-23}$ & $10^{-57}$
\end{tabular} \]
These values are very small, especially in light of the experimental data which suggests much greater values for $D(V_n)$. Further, it is easy to see that $c_n \to 0$ as $n\to\infty$, which does not match the experimental data. 

These lower bounds can be improved if we allow $\phi(a_i)$ to begin or end with $a_i^{-1}$, which was not allowed in the above construction. This results in a slightly better lower bound for $D(V_n)$, but one that still does not match the experimental values and has limit 0 as $n\to\infty$. To achieve any substantial improvement in the lower bounds, we must allow the letters $s_i$ and $l_i$ to become words of arbitrary length. This we do in the next section.

An argument similar to that in Theorem \ref{derthm} does allow us, however, to compute an (apparently) asymptotically sharp upper bound for $D(V_n)$.

\begin{thm}\label{upperbd}
Let $V_n$ be the set of Wecken endomorphisms with no direct relations among their Wagner tails. Then 
$$
D(V_n) \leq \left (1-\frac{1}{n} + \frac{1}{2n(2n-1)}\right)^{n}
$$
\end{thm}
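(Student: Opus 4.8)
The plan is to bound $V_n$ from above by the much simpler set $B_n$ consisting of those endomorphisms $\phi$ for which, for every $i$, the reduced word $\phi(a_i)$ neither begins nor ends with the generator $a_i$. The inclusion $V_n \subseteq B_n$ is immediate from the bookkeeping of Wagner tails described in Section \ref{wagner}: if $\phi(a_i)$ begins with $a_i$, then the fixed point arising from that first letter is an occurrence of $a_i^{+1}$ with empty prefix $v$, so its Wagner tail is $w = v = 1$, which agrees with the tail of the wedge point; hence those two fixed points are directly related and $\phi$ has a direct relation among its Wagner tails, so $\phi \notin V_n$. Symmetrically, if $\phi(a_i)$ ends with $a_i$ then the fixed point at that last letter has $\overline{w} = \overline{v}^{-1} = 1$, again producing a direct relation with the wedge point. (If $\phi(a_i)$ happens to contain no occurrence of $a_i^{\pm 1}$ at all it trivially meets the coordinate condition, so no case analysis is needed.) Thus $D(V_n) \le D(B_n)$.

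Membership in $B_n$ is a condition imposed on each coordinate $\phi(a_i)$ separately, and the $n$ coordinate conditions are carried to one another by the automorphisms of $G$ that permute the generators, which are length-preserving bijections of $G$. Consequently $|B_n \cap G_p^n| = |B_1 \cap G_p|^n$, where $B_1 \subseteq G$ is the set of words that begin and end with a letter other than $a_1$, so $D(B_n) = D(B_1)^n$. It therefore suffices to show $D(B_1) \le 1 - \tfrac1n + \tfrac1{2n(2n-1)}$ and raise to the $n$th power.

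To estimate $D(B_1)$ I would count reduced words of each fixed length $k$ by inclusion--exclusion on the events ``begins with $a_1$'' and ``ends with $a_1$.'' Each of these contains exactly $(2n-1)^{k-1}$ words (the first directly; the second via the length-preserving involution $w \mapsto w^{-1}$, which turns ``ends with $a_1$'' into ``begins with $a_1^{-1}$''). The one nontrivial ingredient is the number $u_k$ of reduced words of length $k$ that both begin and end with $a_1$; for this I would use the transfer matrix on the $2n$ letters, or equivalently the elementary three-term recursion tracking whether the current last letter is $a_1$, is $a_1^{-1}$, or is neither, and solve it to get $u_k = \tfrac{(2n-1)^{k-1}}{2n} + O(1)$. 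Inclusion--exclusion then gives $|B_1 \cap \overline{G_k}| = (2n-1)^{k-1}\bigl(2n-2+\tfrac1{2n}\bigr) + O(1)$; summing over $k \le p$, dividing by $|G_p| = \tfrac{n(2n-1)^p-1}{n-1}$, and using that the geometric growth makes the terms with $k$ near $p$ dominate, I obtain $D(B_1) = \bigl(\tfrac{2n-1}{2n}\bigr)^2 = 1 - \tfrac1n + \tfrac1{4n^2}$, which is at most $1 - \tfrac1n + \tfrac1{2n(2n-1)}$ since $4n^2 > 2n(2n-1)$. Raising to the $n$th power completes the proof.

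The main obstacle is the computation of $u_k$: this amounts to showing that in a long uniformly random reduced word the last letter is asymptotically equidistributed even after conditioning on the value of the first letter, and it is the one place where the transfer matrix (or the explicit solution of the recursion) is genuinely needed. Everything else, including the passage from the fixed-length counts to the density over $G_p$, is routine bookkeeping once one tracks which term dominates. I would also remark that the argument in fact yields the slightly cleaner bound $\bigl(1 - \tfrac1{2n}\bigr)^{2n}$, and that the experimental values in Table \ref{data} indicate this upper bound is nearly sharp, so one should not expect to do substantially better for $D(V_n)$ by elementary counting.
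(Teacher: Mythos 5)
Your proof is correct and follows essentially the same approach as the paper: observe that $V_n$ is disjoint from the set of $\phi$ where some $\phi(a_i)$ begins or ends with $a_i$, count per-coordinate by inclusion--exclusion on ``begins with $a_i$'' and ``ends with $a_i$,'' and raise the single-coordinate density to the $n$th power. The only substantive difference is in the intersection term: the paper bounds the number of length-$q$ reduced words beginning and ending with $a_1$ crudely above by $(2n-1)^{q-2}$, which yields the stated $\bigl(1-\tfrac1n+\tfrac1{2n(2n-1)}\bigr)^n$, whereas you compute that count exactly (up to $O(1)$) via the transfer matrix as $\tfrac{(2n-1)^{q-1}}{2n}$, arriving at the slightly sharper $\bigl(1-\tfrac1{2n}\bigr)^{2n}$, which indeed implies the theorem since $4n^2 > 2n(2n-1)$.
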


\begin{proof}
Let $B_n$ be the set of homomorphisms $\phi$ such that for some $i$ we have $\phi(a_i)$ beginning with $a_i$, and $C_n$ be the set of $\phi$ such that for some $i$ we have $\phi(a_i)$ ending in $a_i$.  Then for all $\phi$ in $B_n \cup C_n$,  one of the Wagner tails corresponding to this occurrence of $a_i$ in $\phi(a_i)$ will be trivial and so it will equal the tail of the wedge point. 

So $B_n \cup C_n$ and $V_n$ are disjoint sets. 

Let $B(p)$ equal the set of words on $n$ generators of length $\leq p$ that begin with $a_i$ and $C(p)$ equal the set of words on $n$ generators of length $\leq p$ that end with $a_i$. 
Then we have

\begin{align*}
| V_n \cap G_p^n | & \leq \left(|G_p|- |B(p) \cup C(p)| \right)^n = \left( \sum_{q=1}^{p} 2n(2n-1)^{q-1} - |B(p) \cup C(p)| \right)^n \\
&=\left( \sum_{q=1}^{p} 2n(2n-1)^{q-1} - |B(p)|- |C(p)| + | B(p) \cap C(p) | \right) ^n\\
&\leq \left( \sum_{q=1}^{p} 2n(2n-1)^{q-1} - 2 \sum_{q=1}^{p}(2n-1)^{q-1} + \sum_{q=2}^{p} (2n-1)^{q-2}\right)^n \\
&= \left( \frac{n(2n-1)^p-1}{n-1} -  2 \frac{(2n-1)^{p}-1}{2n-2} + \frac{(2n-1)^{p-1} -1}{2n-2} \right) ^n.
\end{align*}

So 
\begin{align*}
D(V_n) &\leq \lim_{p \to \infty} \frac{  \left( \frac{(n-1)(2n-1)^{p} - 2 + .5 (2n-1)^{p-1} -.5}{n-1} \right)^n } 
{\left( \frac{ n(2n-1)^p-1}{n-1} \right)^n } \\
&= \lim_{p \to \infty} \left( \frac{ (n-1)(2n-1)^{p}-2 + .5(2n-1)^{p-1} - .5}{ (2n-2)( n(2n-1)^p-1)} \right) ^n \\
&= \lim_{p \to \infty} \left( \frac{(n-1)(2n-1)^p}{n(2n-1)^p} + \frac{(2n-1)^{p-1}}{2n(2n-1)^{p}}   \right)^n \\
&= \left( \frac{n-1}{n} + \frac{1}{2n(2n-1)}   \right)^n= \left( 1 - \frac{1}{n} + \frac{1}{2n(2n-1)} \right)^n.
\end{align*}

\end{proof}

This bound takes the following values:

\[ \begin{tabular}{c |  c c c c c c  }
$n$ &  2 & 3 & 5 & 10 & 20 & 50\\
\hline
$ \left( 1 - \frac{1}{n} + \frac{1}{2n(2n-1)} \right)^n$ & .3403 & .3430 & .3511 & .3590 & .3634 & .3661
\end{tabular} \]

Note that these upper bounds on $D(V_n)$ need not be upper bounds on $D(W_n)$. Indeed we have already seen that $D(W_2) > D(V_2)$. Thus the upper bound on $D(V_n)$ should be seen not as an upper bound on the density of Wecken homomorphisms, but as an upper bound on the effectiveness of Lemma \ref{differenttails} for approaching this problem.

One of questions suggested by the data concerns the computation of the limit $\lim_{n \to \infty} D(V_n)$. In order to bound this, we will require a technical lemma. The proof is a standard argument using the Taylor series of $\log(1+x)$. We include it for completeness. 

\begin{lem}\label{qkm}
Let $p_1(k)$ and $p_2(m)$ be polynomials such that $\deg p_1(k) \leq \deg p_2(k)-2$.
Then 
\[ \lim_{m \to \infty}\prod_{k=1}^{m} \left( 1+ \frac{a}{m} + \frac{bk}{m^2} + \frac{p_1(k)}{p_2(m)} \right) 
= e^{a+\frac{b}{2}} \]
\end{lem}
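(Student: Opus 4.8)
The plan is to take logarithms and reduce the product to a sum, then expand each summand via the Taylor series of $\log(1+x)$. Write $x_k = \frac{a}{m} + \frac{bk}{m^2} + \frac{p_1(k)}{p_2(m)}$, so that
\[ \log \prod_{k=1}^m (1+x_k) = \sum_{k=1}^m \log(1+x_k) = \sum_{k=1}^m \left( x_k - \frac{x_k^2}{2} + O(x_k^3) \right). \]
The first task is to justify that this expansion is valid uniformly: since $1\le k\le m$, the term $\frac{bk}{m^2}$ is $O(1/m)$, and the degree hypothesis $\deg p_1 \le \deg p_2 - 2$ forces $\frac{p_1(k)}{p_2(m)} = O(1/m)$ uniformly in $k\le m$ as well, so $x_k = O(1/m)$ and the series converges for $m$ large. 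I would then sum each piece of the expansion separately.

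The main computation is the sum of the linear terms:
\[ \sum_{k=1}^m x_k = \sum_{k=1}^m \frac{a}{m} + \sum_{k=1}^m \frac{bk}{m^2} + \sum_{k=1}^m \frac{p_1(k)}{p_2(m)} = a + \frac{b}{m^2}\cdot\frac{m(m+1)}{2} + \frac{1}{p_2(m)}\sum_{k=1}^m p_1(k). \]
The middle term tends to $b/2$. For the last term, $\sum_{k=1}^m p_1(k)$ is a polynomial in $m$ of degree $\deg p_1 + 1 \le \deg p_2 - 1$, so dividing by $p_2(m)$ gives something $O(1/m) \to 0$. Hence $\sum_k x_k \to a + \frac{b}{2}$. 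Next, the quadratic terms: $x_k^2 = O(1/m^2)$ uniformly, so $\sum_{k=1}^m x_k^2 = O(1/m) \to 0$, and likewise $\sum_{k=1}^m O(x_k^3) = O(1/m^2) \to 0$. Therefore $\log \prod_{k=1}^m (1+x_k) \to a + \frac{b}{2}$, and exponentiating (using continuity of $\exp$) gives the claimed limit $e^{a + b/2}$.

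The step I expect to require the most care is the bookkeeping on the $\frac{p_1(k)}{p_2(m)}$ term: one must observe both that it is $O(1/m)$ termwise (so it does not interfere with the Taylor expansion) and that its sum $\frac{1}{p_2(m)}\sum_{k=1}^m p_1(k)$ vanishes in the limit (so it does not contribute to the exponent), and these two facts both rely squarely on the degree inequality $\deg p_1 \le \deg p_2 - 2$. Everything else is routine: the linear sum is an arithmetic-series computation, and the higher-order remainder terms are crushed by the uniform $O(1/m)$ bound on $x_k$. One minor point worth stating explicitly is that $p_2(m)$ is eventually bounded away from zero (it is a polynomial of positive degree), so the quotients make sense for all large $m$.
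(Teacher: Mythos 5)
Your proof is correct and follows essentially the same route as the paper's: take logarithms, expand $\log(1+x_k)$, observe the linear terms sum to $a+\tfrac{b}{2}$ in the limit, and show the quadratic-and-higher remainder vanishes because $x_k=O(1/m)$ uniformly in $k\le m$. The only stylistic difference is that you truncate the Taylor series with an explicit $O(x_k^3)$ remainder and sum each piece, whereas the paper retains the full infinite series and appeals to uniform convergence to interchange the limit in $m$ with the sum over the Taylor index $i$; your bookkeeping is, if anything, slightly tighter on that point.
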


\begin{proof}
Let 
\[
 L = \log \left(\lim_{m \to \infty}\prod_{k=1}^{m} \left( 1+ \frac{a}{m} + \frac{bk}{m^2} + \frac{p_1(k)}{p_2(m)} \right)\right) 
,\] 
and we will show that $L = a + \frac{b}{2}$.  We have 
 
 \[ L = \lim_{m \to \infty} \sum_{k=1}^m \log \left(1+ \frac{a}{m} + \frac{bk}{m^2} + \frac{p_1(k)}{p_2(m)} \right) 
\]
 
We can use the Taylor series expansion for $\log(1+x)$ to get
\begin{align*} 
L &= \lim_{m \to \infty} \sum_{k=1}^{m}  \sum_{i=1}^{\infty} \frac{(-1)^{i+1}}{i} \left( \frac{a}{m} + \frac{bk}{m^2} + \frac{p_1(k)}{p_2(m)} \right)^i \\
&= \lim_{m \to \infty} \sum_{k=1}^{m} \left( \frac am + \frac{bk}{m^2} + \frac{p_1(k)}{p_2(m)} + \sum_{i=2}^{\infty} \frac{(-1)^{i+1}}{i} \left( \frac{a}{m} + \frac{bk}{m^2} + \frac{p_1(k)}{p_2(m)} \right)^i \right) \\
&= \lim_{m \to \infty} \left( \sum_{k=1}^{m} \frac am + \frac{bk}{m^2} + \frac{p_1(k)}{p_2(m)}\right) + \left(\sum_{k=1}^{m} \sum_{i=2}^{\infty} \frac{(-1)^{i+1}}{i} \left( \frac{a}{m} + \frac{bk}{m^2} + \frac{p_1(k)}{p_2(m)} \right)^i \right)
\end{align*}
We will evaluate the two terms above separately.

For the first term, we have
\[
\sum_{k=1}^{m} \frac am + \frac{bk}{m^2} + \frac{p_1(k)}{p_2(m)} 
= a + \frac{b}{m^2} \frac{m(m+1)}{2} + \frac{q(m)}{p_2(m)}, \]
where $q(m)$ is a polynomial of degree $\deg p_1(k) + 1 < \deg p_2(m)$. Thus the limit of this term is $a + \frac b2$. It remains to show that the other term above goes to 0 in the limit.

Since the Taylor series converges uniformly we may interchange limits as follows:
\[ \lim_{m \to \infty} \sum_{k=1}^{m} \sum_{i=2}^{\infty} \frac{(-1)^{i+1}}{i} \left( \frac{a}{m} + \frac{bk}{m^2} + \frac{p_1(k)}{p_2(m)} \right)^i 
= \sum_{i=2}^\infty \frac{(-1)^i}i \lim_{m\to \infty} \sum_{k=1}^m \left( \frac{a}{m} + \frac{bk}{m^2} + \frac{p_1(k)}{p_2(m)} \right)^i \]
 
Since $i\ge 2$ above, each term in the inner sum expands to a sum of terms in which the degree in $m$ of the denominator is at least 2 more than the degree in $k$ of the numerator. The summation in $m$ will produce terms in which the degree in $m$ of the denominator is at least 1 more than the degree in $m$ of the numerator, and thus the limit in $m$ of the inner sum will be zero. Thus we have
\[ \lim_{m \to \infty} \sum_{k=1}^{m} \sum_{i=2}^{\infty} \frac{(-1)^{i+1}}{i} \left( \frac{a}{m} + \frac{bk}{m^2} + \frac{p_1(k)}{p_2(m)} \right)^i = 0 \]
as desired.
\end{proof}

We are interested in bounding $L = \lim_{n \to \infty} D(V_n)$. We have shown, by Theorem \ref{upperbd} and Lemma \ref{qkm}, that 
\[ 0 \le L \leq \lim_{n \to \infty}\left(1-\frac{1}{n} + \frac{1}{2n(2n-1)}\right)^{n}= e^{-1} \approx .3678 \]
This asymptotic bound of $\frac{1}{e}$ matches the experimental data so closely that we make the following conjecture:

\begin{conj}
\[ \lim_{n\to \infty} D(V_n) = \frac{1}{e} \]
\end{conj}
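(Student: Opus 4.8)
The upper bound $\lim_{n\to\infty}D(V_n)\le 1/e$ is already in hand: by Theorem~\ref{upperbd} and Lemma~\ref{qkm} we have $D(V_n)\le\left(1-\frac1n+\frac1{2n(2n-1)}\right)^n$, whose limit is $1/e$. So the conjecture reduces to $\liminf_{n\to\infty}D(V_n)\ge 1/e$. The plan is to prove this by comparing $V_n$ with the slightly larger set $E_n$ of endomorphisms $\phi$ such that no $\phi(a_i)$ begins or ends with $a_i$ --- equivalently, the complement of the set called $B_n\cup C_n$ in the proof of Theorem~\ref{upperbd}. The counting in that proof is in fact an equality for $E_n$, so $D(E_n)=\left(1-\frac1n+\frac1{2n(2n-1)}\right)^n\to 1/e$, and $V_n\subseteq E_n$. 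Writing $\mathrm{Bad}_n=E_n\setminus V_n$ for the endomorphisms in $E_n$ that nonetheless have two coinciding non-trivial Wagner tails, it suffices to prove $D(\mathrm{Bad}_n)\to 0$, since then $D(V_n)\ge D(E_n)-D(\mathrm{Bad}_n)\to 1/e$ and the limit exists and equals $1/e$. This is a ``subtractive'' strategy, complementary to the ``constructive'' lower bound of Theorem~\ref{bestbound} (whose limit only reaches $e^{-3}$): rather than enlarging an explicit family inside $V_n$, one shows that almost nothing in $E_n$ is thrown away.

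To bound $D(\mathrm{Bad}_n)$ I would apply a union bound over the finitely many shapes a tail coincidence can take. For occurrences $x\ne y$ the possibilities are $w_x=w_y$, $w_x=\overline w_y$, $\overline w_x=\overline w_y$, each with $x,y$ inside the same $\phi(a_i)$ or inside different $\phi(a_i),\phi(a_j)$; and for a single occurrence there is $w_x=\overline w_x$. The last forces $\phi(a_i)$ to be a conjugate $v\,a_i^{\pm1}\,v^{-1}$, a condition of density $0$ for every fixed $n$ (the number of such words of length $p$ is exponentially smaller than $|G_p|$), so it contributes $0$ in the limit. The same-word cases $w_x=w_y$ and $\overline w_x=\overline w_y$ are impossible for reduced words: both tails are prefixes (resp.\ reverse-inverted suffixes) of the same $\phi(a_i)$, so equal length forces them equal and forces the occurrences $x,y$ to be adjacent with opposite signs, i.e.\ a forbidden subword $a_ia_i^{-1}$ --- exactly the mechanism used in Lemma~\ref{dirrel}. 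What remains is the three cross-word coincidences and the same-word coincidence $w_x=\overline w_y$, and for each I would estimate the density of the corresponding part of $\mathrm{Bad}_n$ using two observations. First: a Wagner tail of length $\ell$ is, up to an $O(1)$ adjustment at its end, an initial segment (for $w$) or a reverse-inverted terminal segment (for $\overline w$) of the relevant $\phi(a_i)$, so fixing two such tails to be equal pins down $\ell-O(1)$ otherwise-free letters and has conditional density $O\!\big((2n-1)^{-\ell}\big)$, with an extra factor $O\!\big((2n)^{-1}\big)$ in the cross-word case from independence of distinct $\phi(a_i)$. Second: in a uniformly random reduced word the occurrences of a fixed generator $a_i^{\pm1}$ have gaps that behave like i.i.d.\ geometric variables of mean of order $n$, so the first such occurrence already sits at depth of order $n$; quantitatively, the expectation over $\phi\in G_p^n$ of $\sum_x (2n-1)^{-|w_x|}$, summed over occurrences $x$ in a single $\phi(a_i)$, is $O(1/n)$. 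Combining these with the matching-length requirement $|w_x|=|w_y|$ --- which in the same-word case forces the two occurrences to sit at mirror depths, pairing a geometric depth from the left with an independent one from the right, and in the cross-word case pairs two independent geometric depths --- the relevant double sums come out $O(1/n^2)$ per word $i$ and $O(1/n^3)$ per pair $i\ne j$; summing over the $O(n)$ choices of $i$ and the $O(n^2)$ choices of $(i,j)$ leaves $D(\mathrm{Bad}_n)=O(1/n)\to 0$.

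The main obstacle is making the second observation, and its interaction with the density limit, rigorous. A uniformly random element of $G_p$ is not an i.i.d.\ string --- reducedness couples consecutive letters --- so the ``geometric gaps of mean of order $n$'' and ``$O((2n-1)^{-\ell})$ per pinned letter'' statements must be derived from the transfer-matrix combinatorics of reduced words, with constants uniform in both $p$ and $n$; these uniform estimates then have to be carried through $D(\mathrm{Bad}_n)=\lim_{p\to\infty}|\mathrm{Bad}_n\cap G_p^n|/|G_p^n|$, which in turn requires knowing that the length profile $(p_1,\dots,p_n)$ of a typical tuple in $G_p^n$ concentrates on profiles with every $p_i$ near $p$ --- true, since words of length $p$ dominate $G_p$, but it must be made quantitative so that short-word anomalies do not spoil the union bound. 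I expect a coarser version of this analysis already yields a non-trivial intermediate improvement --- for instance $\liminf_{n}D(V_n)\ge e^{-2}$, improving the $e^{-3}$ of Section~\ref{new} --- and that squeezing $D(\mathrm{Bad}_n)$ all the way down to $o(1)$, hence pinning the limit at the sharp value $1/e$, is precisely where the full strength of the renewal estimates for random reduced words is needed.
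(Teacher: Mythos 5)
This statement is explicitly a \emph{Conjecture} in the paper, and the authors write that they ``will not be able to prove the full conjecture''; the paper's rigorous results only reach $\liminf_{n} D(V_n) \ge e^{-3}$ via Theorem~\ref{xkykdens} and Lemma~\ref{denslim1}, with a conjectural improvement to $e^{-2}$ via Theorem~\ref{bestbound}. So there is no paper proof of the statement you are asked to prove, and your proposal is an attempt at a genuinely new theorem. Your strategy is also a genuinely different \emph{kind} of argument from the paper's: the paper works bottom-up, constructing explicit families $A_n, K_n, L_n$ inside $V_n$ and counting them, which necessarily discards mass at each stage (the $m_k$'s in Lemma~\ref{xkyklem} are forced to avoid the $a_k$'s near the ends, for example) and plateaus at $e^{-3}$ or $e^{-2}$. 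You work top-down from the ambient set $E_n$, whose density tends to $1/e$, and try to show that what gets thrown away --- $\mathrm{Bad}_n = E_n \setminus V_n$ --- is $o(1)$. This is the natural route to the \emph{sharp} constant, and it matches the numerics in Table~\ref{data}, where $D_p(V_n)$ tracks the upper-bound curve from below.

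Two small errors, neither fatal. First, $D(E_n)$ is \emph{not} exactly $\left(1-\frac1n+\frac1{2n(2n-1)}\right)^n$: the paper's estimate $|B(p)\cap C(p)| \le \sum_{q=2}^p (2n-1)^{q-2}$ is a genuine overcount (already for $q=4$ and $n\ge 2$), and a transfer-matrix computation for reduced words gives $|B_i(p)\cap C_i(p)|/|G_p| \to 1/(2n)^2$, hence $D(E_n) \to \lim_n\left(1-\frac1{2n}\right)^{2n} = e^{-1}$, which is what you actually need. Second, the expectation $E\!\left[\sum_x (2n-1)^{-|w_x|}\right]$ over occurrences in a single $\phi(a_i)$ is $O(1/n^2)$, not $O(1/n)$ --- position $k$ is $a_i^{\pm1}$ with probability $O(1/n)$ and $\sum_{\ell\ge 1}(2n-1)^{-\ell}=O(1/n)$ --- but this only improves your bound. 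Your disposal of the degenerate cases is correct (same-word $w_x=w_y$ and $\overline w_x=\overline w_y$ are forced to be unequal by the reducedness argument already used in Lemma~\ref{dirrel}; $w_x=\overline w_x$ forces $\epsilon=+1$ and $\phi(a_i)=va_iv^{-1}$, a density-zero condition for each fixed $n$), and the heuristic that the surviving coincidences contribute $O(1/n^3)$ per $(i,j)$-pair and $O(1/n^3)$ per word --- dominated by birthday-type collisions among the $O(n)$ length-one tails in an alphabet of size $2n$ --- comes out $O(1/n)$ overall, consistent with the data.

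The genuine gap is the one you identify: making the random-reduced-word estimates rigorous and \emph{uniform in both $p$ and $n$}, so they survive the iterated limits defining $D(\mathrm{Bad}_n)$. Specifically you need: (i) a transfer-matrix or renewal argument giving the density of reduced words with a pinned length-$\ell$ prefix/suffix, with constants uniform in $n$; (ii) approximate independence of prefix and suffix of a long random reduced word (for the same-word $w_x=\overline w_y$ case), with quantitative error; (iii) a quantitative concentration of the length profile of $G_p^n$ near $(p,\dots,p)$ so the endpoint corrections in (i)--(ii) don't accumulate. None of this looks conceptually difficult --- the transfer matrix here is $J-P$ with spectrum $\{2n-1,\pm1\}$, which is extremely well-behaved --- but it is the whole technical substance of the argument, and as written the proposal is a credible research program rather than a proof. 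I would also encourage you to make explicit that you are only claiming $\lim_n D(V_n)$ exists and equals $1/e$ \emph{after} taking $\lim_p$ first, and to check that the union bound over the $O(n^2)$ pairs does not accidentally pick up a factor that cancels the $O(1/n^3)$ gain.
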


We will not be able to prove the full conjecture, but
in the next section we will improve our lower bound to show
 that $ \frac{1}{e^3} \leq L $, and give very strong evidence that $\frac{1}{e^2} \le L$.

\section{Improving our lower bound}
\label{new}
In this section we will give a stronger version of the argument used in Lemma \ref{dirrel} and Theorem \ref{derthm}, allowing the letters $s_i$ and $l_i$ to become words of arbitrary length. 

 For a word $w$, we will denote the initial subword of length $k$ as $w|_k$ and the inverse of the terminal subword of length $k$ as $w|^k$.  If neither $a_k$ nor $a_k^{-1}$ appears in a word $w$, we will say that  $w$ is $a_k$-free. 

\begin{defn}\label{vnontrivial}
Let $\phi: \langle a_1, \dots, a_n \rangle \to  \langle a_1, \dots, a_n \rangle$ be given.
If, when we construct Wagner tails for $\phi$, the words $v$ and $\bar v$ are all nontrivial (except at the base point), then we say that $\phi$ is $v$-nontrivial. This is equivalent to the condition that for all $k$,
we can write
$ \phi(a_k)=s_ka_k^{\pm1}m_ka_k^{\pm1}l_k^{-1}, $ where $s_k$ and $l_k$ are nontrivial $a_k$-free words. 
 If $\phi$ is $v$-nontrivial, let $x_k = \left| s_k \right|$ and $y_k = \left| l_k \right|$ for each $k$. 
Note that $s_k= \phi(a_k)|_{x_k}$ and $l_k=\phi(a_k)|^{y_k}$. 
\end{defn}

The following lemma and its proof are a generalization of Lemma \ref{dirrel}, where we had $x_k=y_k=1$ for all $k$. 

\begin{lem}
\label{nicesowecken}
Let $X$ be a space such that $\pi_1(X)= \langle a_1, \dots, a_n \rangle$.  Let $f: X \to X$ be a map whose induced homomorphism $\phi: \pi_1(X) \to \pi_1(X)$  is $v$-nontrivial. 

If for all $k= 1, 2, \dots, n$ we have $l_k = \phi(a_k)|^{y_k} \neq \phi(a_k)|_{y_k}$ and additionally that $s_k = \phi(a_k)|_{x_k}$ does not equal $\phi(a_i)|_{x_k}$ or $\phi(a_i)|^{x_k}$ for any $i<k$ and
$l_k = \phi(a_k)|^{y_k}$ does not equal $\phi(a_i)|_{y_k}$ or $\phi(a_i)|^{y_k}$ for any $i<k$, then there are no direct relations among the Wagner tails of $\phi$, so $f$ is Wecken. 

\end{lem}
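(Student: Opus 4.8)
The plan is to prove the sharper statement that, aside from the tail $1$ shared by the wedge point, every Wagner tail of $\phi$ is nontrivial and every two of them are distinct; it then follows that $\phi$ has no direct relations among its Wagner tails, and $f$ is Wecken by Lemma \ref{differenttails}. This is a length-aware refinement of the argument for Lemma \ref{dirrel}, in which $s_i$ and $l_i$ were single letters.

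First I would record how each tail sits inside the words $\phi(a_i)$. Let $x$ be a fixed point arising from an occurrence of $a_i^{\pm1}$ in $\phi(a_i)$, so $\phi(a_i) = v_x a_i^{\pm1}\overline{v}_x$. Unwinding the construction of the Wagner tails, and using that $\phi$ is $v$-nontrivial so that $v_x,\overline{v}_x$ are nonempty while $s_i,l_i$ are nonempty $a_i$-free words (Definition \ref{vnontrivial}), one checks two facts: (i) $w_x$ is a nontrivial reduced prefix of $\phi(a_i)$ with $|w_x|\ge x_i$, and $w_x|_{x_i} = s_i = \phi(a_i)|_{x_i}$; and (ii) with $m = |\overline{w}_x|$, the tail $\overline{w}_x = \phi(a_i)|^{m}$ is nontrivial, $m\ge y_i$, and $\overline{w}_x|_{y_i} = l_i = \phi(a_i)|^{y_i}$. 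In particular no non-wedge tail is trivial, so no non-wedge fixed point is directly related to the wedge point, and it remains to show $\{w_x,\overline{w}_x\}\cap\{w_y,\overline{w}_y\}=\emptyset$ for distinct non-wedge fixed points $x\ne y$, and that $w_x\ne\overline{w}_x$.

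Suppose first that $x$ and $y$ both come from the loop $a_i$, with $x$ preceding $y$ in $\phi(a_i)$. Since $\phi(a_i)$ is reduced, I would argue that $|w_x|<|w_y|$ and $|\overline{w}_x|>|\overline{w}_y|$ strictly — equality in either case would force a cancelling pair $a_i^{-1}a_i$ or $a_ia_i^{-1}$ to appear inside the reduced word $\phi(a_i)$ — so $w_x\ne w_y$ and $\overline{w}_x\ne\overline{w}_y$. For the remaining comparisons (including $w_x$ versus $\overline{w}_x$): if $w_x=\overline{w}_y$, then by (i) this word is a prefix of $\phi(a_i)$ of length at least $y_i$, while by (ii) its initial subword of length $y_i$ is $l_i$; hence $\phi(a_i)|_{y_i}=l_i$, contradicting the hypothesis $l_i=\phi(a_i)|^{y_i}\ne\phi(a_i)|_{y_i}$. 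The same contradiction disposes of $\overline{w}_x=w_y$ and of $w_x=\overline{w}_x$.

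Now suppose $x$ comes from $a_i$ and $y$ from $a_j$ with $i\ne j$. For two $w$-tails, take $i<j$ without loss of generality; if $w_x=w_y=:z$ then $z$ is a prefix of $\phi(a_i)$ of length $\ge x_j$ (by (i) for $y$), so $\phi(a_i)|_{x_j}=z|_{x_j}=s_j$, contradicting $s_j\ne\phi(a_i)|_{x_j}$. The comparison of two $\overline{w}$-tails is identical using (ii) and $l_j\ne\phi(a_i)|^{y_j}$. The crux — and the step I expect to be the main obstacle — is comparing $w_x$ with $\overline{w}_y$, which is not symmetric in $i$ and $j$ whereas the hypotheses bound $s_k$ and $l_k$ against $\phi(a_i)$ only for $i<k$. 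One must therefore describe $z:=w_x=\overline{w}_y$ via whichever generator has the smaller index: if $i<j$, regard $z$ as a prefix of $\phi(a_i)$, so $z|_{y_j}=\phi(a_i)|_{y_j}$ while also $z|_{y_j}=l_j$ by (ii), forcing $\phi(a_i)|_{y_j}=l_j$ against $l_j\ne\phi(a_i)|_{y_j}$; if $j<i$, regard $z=\overline{w}_y=\phi(a_j)|^{|z|}$, so $z|_{x_i}=\phi(a_j)|^{x_i}$ while also $z|_{x_i}=s_i$ by (i), forcing $\phi(a_j)|^{x_i}=s_i$ against $s_i\ne\phi(a_j)|^{x_i}$; the case $i=j$ was handled above. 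This exhausts all comparisons, so the non-wedge Wagner tails are distinct and nontrivial; hence $\phi$ has no direct relations among its Wagner tails, and $f$ is Wecken by Lemma \ref{differenttails}.
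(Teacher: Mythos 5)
Your proof is correct and follows essentially the same approach as the paper: establish that all Wagner tails (other than the wedge point's) are nontrivial, then split into the cases $i=j$ (using $l_i \neq \phi(a_i)|_{y_i}$) and $i\neq j$ (using the prefix/suffix non-coincidence hypotheses for the smaller index). The only differences are cosmetic — you make explicit the observations (i) and (ii) and split the mixed comparison $w_x = \overline w_y$ into $i<j$ and $j<i$ subcases where the paper handles it by a single WLOG — but the underlying argument is the same.
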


\begin{proof}

Let $x$ be a fixed point of $f$ identified with an occurrence of $a_i^{\pm1}$ in $\phi(a_i)$ with associated Wagner tails $w_x, \overline{w}_x$. The words $s_i$ and $l_i$ are non-trivial, so $w_x$ (which begins with $s_i$) and $\overline{w}_x$, (which begins with $l_i$) are non-trivial. So $x$ is not directly related to $x_0$. 

Let $y \neq x $ be a fixed point of $f$ identified with an occurrence of $a_j^{\pm1}$ in $\phi(a_j)$, with Wagner tails $w_y, \overline{w}_y$. 

Case 1, $i=j$:
Assume without loss of generality that $x$ comes before $y$ in the word $\phi(a_{i})$. 
Then $\left| w_{x} \right| < \left| w_{y} \right|$ and $\left| \overline{w}_{x} \right| > \left| \overline{w}_{y} \right|$. So $w_x\neq w_y$ and $\overline{w}_x \neq \overline{w}_y$.  
Assume for the sake of a contradiction that $w_x = \overline{w}_y$. 
Then, in particular, $\phi(a_i)|^{y_i}$
= $\overline{w}_y|_{y_i}= w_x|_{y_i}=$
  $\phi(a_i)|_{y_i}$. But by assumption $\phi(a_i)|^{y_i} \neq \phi(a_i)|_{y_i}$, which is a contradiction. 
We can similarly show that $\overline{w}_x \neq w_y$. 

Case 2, $i \neq j$:
Assume without loss of generality that $i<j$
and assume for the sake of a contradiction that $w_x = w_y$.
Then $ \left| w_x \right| = \left|w_y\right| \geq x_j.$ So $w_x$ contains at least $x_j$ letters, and so $w_x|_{x_j}$ is defined and equals $\phi(a_i)|_{x_j}$. But since $w_x=w_y$, we have that $w_x|_{x_j}= w_y|_{x_j}= \phi(a_j)|_{x_j}$.  But since $i<j$ we have that $\phi(a_j)|_{x_j} \neq \phi(a_i)|_{x_j}$. A similar argument shows that $w_x \neq \overline{w}_y$, $\overline{w}_x \neq w_y$, and $\overline{w}_x \neq \overline{w}_y$. 

So $\{w_x, \overline{w}_x\} \cap \{w_y, \overline{w}_y\} = \emptyset$ and $x$ is not directly related to $y$. 
\end{proof}

Let $K_n \subset W_n$ be the set of homomorphisms satisfying the conditions of Lemma \ref{nicesowecken}. So $D(K_n) \leq D(W_n)$. In order to calculate $D(K_n)$, we obtain a lower bound for $\left| G_p^n \cap K_n\right|$.

\begin{lem}
\label{xkyklem}
$$
\left| G_p^n \cap K_n \right| \geq  \prod_{k=1}^{n} \sum_{x=1}^{p-4} \sum_{y=1}^{p-x-3}   4X_kZ_kY_k
$$

where 
\begin{align*}
X_k&= (2n-2)(2n-3)^{x-1}-2(k-1)\\
Y_k&= (2n-2)(2n-3)^{y-1}-2(k-1)-1 \\
Z_k&= (2n-1)^{p-x-y-2}-1. 
\end{align*}
\end{lem}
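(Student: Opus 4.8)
The plan is to count, for each generator $a_k$, a sufficiently large family of admissible reduced words that can serve as $\phi(a_k)$ while satisfying the hypotheses of Lemma \ref{nicesowecken}, and then take the product over $k$ since the choices for distinct generators are independent. Fix $k$ and suppose we have already chosen $\phi(a_1), \dots, \phi(a_{k-1})$. Following Definition \ref{vnontrivial}, we build $\phi(a_k) = s_k a_k^{\pm 1} m_k a_k^{\pm 1} l_k^{-1}$, where $s_k$ and $l_k$ are nontrivial $a_k$-free words of lengths $x = x_k$ and $y = y_k$ respectively, and $m_k$ is an arbitrary (reduced) interior word. I would first choose the two signs $a_k^{\pm1}$ (the factor $4$), then count the choices for $s_k$, then for $l_k$, then for the interior $m_k$, recording the worst-case count at each stage; the restrictions imposed by Lemma \ref{nicesowecken} only ever forbid a bounded number of options, which is exactly what produces the $-2(k-1)$ and $-2(k-1)-1$ corrections.

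For the count of $s_k$: it is an $a_k$-free reduced word of length $x$ whose first letter we are free to pick among $2n-2$ letters (all $2n$ letters except $a_k, a_k^{-1}$), and each subsequent letter among $2n-3$ (forbidding also the inverse of the preceding letter), giving $(2n-2)(2n-3)^{x-1}$ words a priori. The hypothesis that $s_k = \phi(a_k)|_{x_k}$ must avoid $\phi(a_i)|_{x_k}$ and $\phi(a_i)|^{x_k}$ for each $i<k$ removes at most $2(k-1)$ words, yielding $X_k = (2n-2)(2n-3)^{x-1} - 2(k-1)$ valid choices; this is a lower bound because some of these forbidden words may coincide with each other or fail to be $a_k$-free. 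The count of $l_k$ is identical except that we must additionally enforce $l_k \neq \phi(a_k)|_{y_k}$, i.e.\ $l_k$ must differ from $s_k$ restricted/extended appropriately — in the worst case one extra forbidden word — giving $Y_k = (2n-2)(2n-3)^{y-1} - 2(k-1) - 1$. For the interior $m_k$, which occupies the remaining $p - x - y - 2$ letters once $x + y + 2 \le p$, the usual reduced-word count with the two boundary constraints (not cancelling into the flanking $a_k^{\pm1}$'s) gives at least $(2n-1)^{p-x-y-2} - 1 = Z_k$, exactly as in the analogous step of Theorem \ref{derthm}. Then I would sum over the admissible ranges $x \ge 1$, $y \ge 1$, $x + y + 2 \le p - 1$ (so that $m_k$ has at least length $1$; hence $x$ ranges to $p-4$ and $y$ to $p-x-3$), and multiply over $k$.

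The main obstacle — really the only subtle point — is justifying that the corrections $-2(k-1)$ and $-2(k-1)-1$ genuinely suffice, i.e.\ that when we pick $s_k$ (resp.\ $l_k$) to avoid the $2(k-1)$ (resp.\ $2(k-1)+1$) explicitly listed length-$x$ (resp.\ length-$y$) words, \emph{every} resulting $\phi$ lies in $K_n$. This requires checking that the hypotheses of Lemma \ref{nicesowecken} only constrain the \emph{initial} and \emph{terminal} segments $\phi(a_k)|_{x_k}, \phi(a_k)|^{y_k}$ of $\phi(a_k)$ of the chosen lengths, and that choosing these as genuine prefixes/suffixes of an $a_k$-free type and avoiding the finitely many bad words is both necessary and sufficient; one must also be slightly careful that a forbidden word $\phi(a_i)|_{x_k}$ might have length less than $x_k$ (if $|\phi(a_i)| < x_k$), in which case it simply imposes no constraint and the bound is only more generous. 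Everything else is the routine reduced-word bookkeeping already used in Theorems \ref{derthm} and \ref{upperbd}, and the product form follows because the admissible set of $\phi(a_k)$'s, given the earlier coordinates, has cardinality at least the displayed double sum uniformly.
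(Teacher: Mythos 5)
Your proposal matches the paper's proof essentially step-for-step: the same ordered choices (signs, then $s_k$, then $l_k$, then $m_k$), the same per-step lower bounds yielding $X_k$, $Y_k$, $Z_k$, and the same summation ranges over $x$ and $y$ followed by a product over $k$. The only small imprecision is describing $m_k$ as occupying a fixed ``remaining'' $p-x-y-2$ letters; in fact $|\phi(a_k)|$ ranges from $x+y+3$ up to $p$ and $Z_k$ arises from summing a geometric series over the possible lengths of $m_k$, exactly as in Theorem \ref{derthm} --- but your appeal to that theorem and your correct final formula make clear you have this right.
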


\begin{proof}

We will count the number of ways to construct a homomorphism $\phi \in K_n \cap G_p^n$. Let $s_k, m_k,l_k, x_k,$ and $y_k$ be as in Definition \ref{vnontrivial}. 
Since $\phi \in K_n$, we have that $x_k$ and $ y_k$ are at least $1$ and $x_k+y_k \leq p-3$, and so  $\left| m_k \right| = p-x_k-y_k-2 \geq 1$. 
Then we have all of $\left| s_k \right|, \left| m_k \right|, $ and $\left| l_k^{-1} \right|$ greater than or equal to 1. 

For given $x=x_k$ and $y=y_k$, we want to find the number of ways to construct $s_k, m_k,$ and $l_k$ such that the conditions of Lemma \ref{nicesowecken} are satisfied at each $k$. 

For $k\geq1$, assume that we have already determined $\phi(a_1), \phi(a_2), \dots, \phi(a_{k-1})$. 
The word $s_k$ can be any $a_k$-free word of length $x$ that does not equal $\phi(a_i)|_{x}$ or $\phi(a_i)|^{x}$ for any $i<k$. So there are at least $(2n-2)(2n-3)^{x-1} - 2(k-1)=X_k$ choices. 

Similarly, the word $l_k$ can be any $a_k$-free word of length $y$ that does not equal $\phi(a_i)|_{y}$ or $\phi(a_i)|^{y}$ for any $i<k$ and that does not equal $\phi(a_k)|_{y}$. So there are at least $(2n-2)(2n-3)^{y-1}- 2(k-1)-1=Y_k$ ways to determine $l_k$. Note that $Y_k$ can possibly be zero or negative. In practice this occurs very infrequently for $n>2$ and will not significantly affect our count. For the $n=2$ case we will see that the lower bound given in this Lemma is not useful.

If we assume that  $|\phi(a_k)|=p_k$ then there are at least $(2n-1)^{p_k-x-y-2}(2n-2)$ choices for the word $m_k$, whose first and last letters are restricted. Summing over $5 \leq p_k \leq p$, we get at least $(2n-1)^{p-x-y-2}-1= Z_k$ ways to determine $m_k$. 

Finally, for each of the two required $a_k^{\pm1}$, we can choose whether the exponent is $1$ or $-1$ in one of 4 ways. So for each selection of $x$ and  $y$ there are at least 
$ 4X_kZ_kY_k$ choices for $\phi(a_k)$. 
So, if we let $x$ and $y$ range from $1$ to $p-3$, there are at least
\[  \sum_{x=1}^{p-4} \sum_{y=1}^{p-x-3} 4X_kZ_kY_k\] ways to determine $\phi(a_k)$, and 

$$  \prod_{k=1}^{n}\sum_{x_k=1}^{p-4} \sum_{y_k=1}^{p-x-3}  4X_kZ_kY_k$$ ways to determine $\phi$. 
\end{proof}

\begin{thm}
\label{xkykdens}
 Let 
\[ d_n= \prod_{k=1}^{n}
 \frac{ 4(n-1)^4-(8k-6)(n-1)^2+4k^2-6k+2}{n(n-1)(2n-1)^2} 
\]
Then if $D(V_n)$ exists we have $D(V_n) \geq d_n$. 

\end{thm}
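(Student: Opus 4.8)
The plan is to push the lower bound of Lemma~\ref{xkyklem} through the density limit. First note that by Lemma~\ref{nicesowecken} every $\phi\in K_n$ has no direct relations among its Wagner tails, hence all its Wagner tails are distinct apart from the repeated $1$ at the wedge point, so $K_n\subseteq V_n$ and $|G_p^n\cap V_n|\ge |G_p^n\cap K_n|$ for every $p$. Using $|G_p^n|=|G_p|^n$ with $|G_p|=\frac{n(2n-1)^p-1}{n-1}$, and the fact that the bound in Lemma~\ref{xkyklem} is a product of $n$ factors indexed by $k$ while $|G_p|^n$ is a product of $n$ identical factors, one gets
\[ \frac{|G_p^n\cap K_n|}{|G_p^n|}\ \ge\ \prod_{k=1}^{n}\frac{1}{|G_p|}\sum_{x=1}^{p-4}\sum_{y=1}^{p-x-3} 4X_kZ_kY_k. \]
Since this is a product of finitely many factors, it suffices to show that for each fixed $k$ the $k$-th factor on the right converges, as $p\to\infty$, to $\frac{4(n-1)^4-(8k-6)(n-1)^2+4k^2-6k+2}{n(n-1)(2n-1)^2}$; the product over $k=1,\dots,n$ is then $d_n$, and $\liminf_{p\to\infty}\frac{|G_p^n\cap V_n|}{|G_p^n|}\ge d_n$, so if $D(V_n)$ exists it is at least $d_n$.

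To evaluate the $k$-th factor, I would write $X_k=A_x-2(k-1)$, $Y_k=B_y-(2k-1)$, and $Z_k=C_{xy}-1$, where $A_x=(2n-2)(2n-3)^{x-1}$, $B_y=(2n-2)(2n-3)^{y-1}$, and $C_{xy}=(2n-1)^{p-x-y-2}$, and expand $X_kZ_kY_k$ into eight terms. The four terms not containing $C_{xy}$ are sums of products of $A_x$, $B_y$, and constants; summed over the triangular range $x\le p-4$, $1\le y\le p-x-3$ they grow at most like a polynomial in $p$ times $(2n-3)^p$, which is $o((2n-1)^p)$, so after dividing by $|G_p|\sim\frac{n}{n-1}(2n-1)^p$ they contribute nothing. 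For each of the four terms containing $C_{xy}$, factor out $(2n-1)^{p-2}$; what remains is geometric in $x$ and in $y$ with ratios $\frac{2n-3}{2n-1}$ or $\frac{1}{2n-1}$, both series converge (to $\frac{2n-3}{2}$ and $\frac{1}{2n-2}$), and restricting to the triangular range rather than the full quadrant $x,y\ge1$ removes only an exponentially small tail. Collecting the surviving contributions one finds
\[ \sum_{x=1}^{p-4}\sum_{y=1}^{p-x-3}4X_kZ_kY_k=\left(4(n-1)^2-4(k-1)-2(2k-1)+\frac{2(k-1)(2k-1)}{(n-1)^2}\right)(2n-1)^{p-2}+o\bigl((2n-1)^p\bigr), \]
and dividing by $|G_p|$, taking $p\to\infty$, and clearing denominators yields exactly $\frac{4(n-1)^4-(8k-6)(n-1)^2+4k^2-6k+2}{n(n-1)(2n-1)^2}$.

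The only delicate points are bookkeeping ones, and this is where I expect the real work to lie: correctly isolating which of the eight expanded terms survive division by $(2n-1)^p$ (precisely those carrying $C_{xy}$), justifying that replacing the triangular summation range by the full quadrant perturbs each surviving geometric sum by only an exponentially small amount, and carrying out the final algebraic simplification of the sum of four rational functions of $n$ and $k$ into the single polynomial numerator in the statement. I would also remark in passing that the factor $Y_k$ in Lemma~\ref{xkyklem} can be zero or negative for small $y$ and large $k$, which merely makes the stated bound looser rather than invalid (the genuine count of choices is $\ge 0$ while $X_k,Z_k\ge 0$ throughout the range), and has no effect on the limiting value. The final algebraic identity is exactly the sort of simplification best confirmed with a computer algebra system, as elsewhere in the paper.
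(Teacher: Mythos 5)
Your proposal is correct and takes essentially the same route as the paper: conclude $K_n\subseteq V_n$ from Lemma~\ref{nicesowecken}, split the density bound into a product of $n$ per-$k$ factors using Lemma~\ref{xkyklem} and $|G_p^n|=|G_p|^n$, and evaluate each factor's limit as $p\to\infty$ as a geometric series in $x$ and $y$. The paper outsources that last evaluation to Mathematica (noting it "can be done by hand with some effort" by discarding vanishing terms); your eight-term expansion isolating the $C_{xy}$-bearing terms, the triangular-vs-quadrant estimate, and the resulting coefficient $4(n-1)^2-4(k-1)-2(2k-1)+\frac{2(k-1)(2k-1)}{(n-1)^2}$ (which does simplify to the stated numerator over $(n-1)^2$) are exactly that by-hand computation made explicit.
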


\begin{proof}
By Lemma \ref{xkyklem} we have that 
\[
|G_p^n \cap V_n|\geq|G_p^n \cap K_n|\geq  \prod_{k=1}^{n}\sum_{x=1}^{p-4} \sum_{y=1}^{p-x-3}  4X_kZ_kY_k, \]
and so
\[D(V_n) \geq \lim_{p \to \infty} \frac{ |G_p^n \cap K_n|}{|G_p|^n} \geq \lim_{p \to \infty} \prod_{k=1}^{n} \frac{1}{|G_p|} \sum_{x=1}^{p-4} \sum_{y=1}^{p-x-3}  4X_kZ_k Y_k. \]

The above sums involve $x$ and $y$ only in exponents with bases $2n-1, 2n-2$, and $2n-3$. Thus the sums can be evaluated as finite geometric series. This can be done by hand with some effort, a task made easier by the fact that we can ignore terms that go to zero in the limit as $p\to \infty$. Alternatively this can be done by computer (Sage has trouble simplifying the sums, but Mathematica evaluates them without problems) and we obtain
\[
\lim_{p \to \infty} \frac{1}{|G_p|} \sum_{x=1}^{p-4} \sum_{y=1}^{p-x-3}  4X_kZ_k Y_k = \frac{4(n-1)^4-(8k-6)(n-1)^2+4k^2-6k+2}{n(n-1)(2n-1)^2}\\
\]
which gives the desired bound.
\end{proof}

This lower bound takes the following values: 

\[ \begin{tabular}{ c|  c c c c c c }
$n$ & 2& 3&5 & 10&  20 & 50  \\
\hline
$d_n$ & 0 & .0059 & .0209 & .0348 & .0421 & .0467
\end{tabular} \]
(The value $d_2$ is exactly 0.)
The values of $d_n$ seem to approach a nonzero limit in $n$, and in fact, we can compute this limit exactly.

\begin{lem}
\label{denslim1}
\[ \lim_{n \to \infty}d_n= \frac{1}{e^3} \approx .0497\]
\end{lem}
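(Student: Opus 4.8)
The plan is to analyze the product formula
\[ d_n= \prod_{k=1}^{n} \frac{ 4(n-1)^4-(8k-6)(n-1)^2+4k^2-6k+2}{n(n-1)(2n-1)^2} \]
by factoring the numerator and reorganizing the product so that Lemma \ref{qkm} applies. First I would try to factor the $k$-dependent quadratic $4(n-1)^4 - (8k-6)(n-1)^2 + 4k^2 - 6k + 2$ in $k$; treating $(n-1)^2$ as a parameter, the discriminant should be a perfect square, and I expect the numerator factors as something like $\bigl(2(n-1)^2 - 2k + c_1\bigr)\bigl(2(n-1)^2 - 2k + c_2\bigr)$ for small constants $c_1, c_2$ (one checks $c_1, c_2$ by matching the linear and constant terms; I anticipate $\{c_1,c_2\} = \{1,2\}$ or similar, so that the numerator is $(2(n-1)^2 - 2k+2)(2(n-1)^2-2k+1)$). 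Verifying this factorization is a routine but essential computation.

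With the factorization in hand, the product splits as
\[ d_n = \prod_{k=1}^n \frac{2(n-1)^2 - 2k + 2}{\sqrt{n(n-1)}\,(2n-1)} \cdot \prod_{k=1}^n \frac{2(n-1)^2 - 2k + 1}{\sqrt{n(n-1)}\,(2n-1)}, \]
or more cleanly I would keep the denominator $n(n-1)(2n-1)^2$ attached to the combined numerator term-by-term. The next step is to write each factor in the form $1 + \frac{a}{n} + \frac{bk}{n^2} + \frac{p_1(k)}{p_2(n)}$ demanded by Lemma \ref{qkm}, with $m = n$. Since $2(n-1)^2 = 2n^2 - 4n + 2$ and the denominator $n(n-1)(2n-1)^2 = 4n^4 + O(n^3)$, each factor is $\frac{4n^4 + (\text{lower order in } n) - 8n^2 k + O(nk) + 4k^2}{4n^4 + O(n^3)}$. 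Dividing, the leading term is $1$; the $-8n^2 k/(4n^4) = -2k/n^2$ term gives the $bk/m^2$ piece with $b = -2$; the $O(n^3)/n^4 = O(1/n)$ piece gives $a/n$; and the remaining terms (including the $4k^2/(4n^4) = k^2/n^4$ term) have numerator degree in $k$ at most $\deg_n(\text{denominator}) - 2 = 2$, so they fit the $p_1(k)/p_2(n)$ slot with $\deg p_1 \le \deg p_2 - 2$. I must compute the constant $a$ exactly: expanding $2(n-1)^2 + c = 2n^2 - 4n + (2+c)$ over $n(n-1)(2n-1)^2$ and extracting the $1/n$ coefficient after also absorbing the denominator's subleading behavior — I expect the two factors (with $c=2$ and $c=1$) to contribute $a$ values summing to $-1$, giving total $a + \frac{b}{2} = -1 + (-1) = -3$ across the combined product.

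Then by Lemma \ref{qkm}, applied to the combined single product over $k=1,\dots,n$ (treating each term as one factor of the required shape with the appropriate $a$, $b$, $p_1$, $p_2$), the limit is $e^{a + b/2} = e^{-3}$, which is the claim. The main obstacle I anticipate is the bookkeeping in the second step: correctly identifying the constant $a$ requires carefully expanding both the numerator $2(n-1)^2 - 2k + c$ and the denominator $n(n-1)(2n-1)^2$ to sufficient order in $n$, performing the division, and checking that every leftover term genuinely has $k$-degree at most two less than the $n$-degree of $p_2(n)$ — in particular the $4k^2$ term in the original numerator must be tracked, though since $\deg_k(4k^2) = 2 \le 4 - 2 = \deg_n p_2 - 2$ it is harmless. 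Once the shape is confirmed, invoking Lemma \ref{qkm} is immediate, so the only real work is this algebraic normalization.
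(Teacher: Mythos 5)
Your overall plan is sound and essentially mirrors the paper's proof: the paper applies polynomial long division directly to the unfactored ratio to rewrite it as $1 - \tfrac{2}{n} - \tfrac{2k}{n^2} + Q(k,n)$ with $Q$ of the shape required by Lemma \ref{qkm}, whereas you first factor the numerator. Your guessed factorization is in fact correct: the discriminant of $4u^2 - (8k-6)u + 4k^2-6k+2$ in $u=(n-1)^2$ is $4$, giving roots $u=k-1$ and $u=k-\tfrac12$, so the numerator is exactly $\bigl(2(n-1)^2-2k+1\bigr)\bigl(2(n-1)^2-2k+2\bigr)$. Whether you then split the product or not, you land in the same place, so this is a valid alternative normalization to the paper's long division; the factoring is an extra step that the paper avoids, but it makes the structure transparent.

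However, the numerical bookkeeping in your final step is wrong twice in ways that happen to cancel. The correct value is $a=-2$, not $-1$: expanding $N_k = 4n^4 - 16n^3 + (30-8k)n^2 + (16k-28)n + (4k^2-14k+12)$ against $D = 4n^4 - 8n^3 + 5n^2 - n$ gives $\tfrac{N_k}{D} = 1 + \tfrac{-8n^3 + \cdots}{D}$, and $\tfrac{-8n^3}{4n^4} = -\tfrac{2}{n}$. (If you split, say, as $\tfrac{2(n-1)^2-2k+2}{n(2n-1)} \cdot \tfrac{2(n-1)^2-2k+1}{(n-1)(2n-1)}$, the two factors contribute $a$-values $-\tfrac32$ and $-\tfrac12$, summing to $-2$, not $-1$.) You also write $-1+(-1)=-3$, which is false. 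With the correct $a=-2$ and your correct $b=-2$, Lemma \ref{qkm} gives $e^{a+b/2} = e^{-2-1} = e^{-3}$ as claimed. You flagged that the exact value of $a$ still needed to be pinned down, which is honest, but the expectation you stated is off and would need to be fixed for the proof to stand.
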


\begin{proof}
We have
\[
\lim_{n \to \infty} d_n= \lim_{n \to \infty} \prod_{k=1}^{n}
 \frac{ 4(n-1)^4-(8k-6)(n-1)^2+4k^2-6k+2}{n(n-1)(2n-1)^2} 
  \]

Using polynomial long division, we have  
\[
\lim_{n \to \infty}D(K_n) = \lim_{n \to \infty} \prod_{k=1}^{n} \left(
1 - \frac 2 n - \frac{2k}{n^2} + Q(k,n) \right)
\]
where $Q(k,n)$ is a ratio of polynomials of the form appearing in Lemma \ref{qkm} and so by that lemma the above limit is $e^{-2-2/2} = e^{-3}$. 
\end{proof}

In our construction of $K_n$, we have unnecessarily required that $\phi(a_i)$ neither begins nor ends with $a_i^{-1}$. We can improve (though complicate) our lower bound by allowing this to occur. 

We say that a homomorphism $\phi$ is $w$-nontrivial when all of the Wagner tails (except those of the base point) are nontrivial. This class of homomorphisms includes the $v$-nontrivial homomorphisms discussed above. 

When we examine the image words $\phi(a_k)$ for a $w$-nontrivial homomorphism, they will come in one of the following types:
If $\phi(a_k)$ can be written in reduced form as $s_ka_k^{\pm1}m_ka_k^{\pm1}l_k^{-1}$ where $s_k$ and $l_k$ are $a_k$-free, we will say it is \emph{Type 0}. If it can be written as $a_k^{-1}m_k a_k^{\pm1}l_k^{-1}$ it is  \emph{Type 1a}, and in this case we define $s_k = a_k^{-1}$. If it can be written as $s_ka_k^{\pm1}m_ka_k^{-1}$, it is \emph{Type 1b}, and we define $l_k = a_k$. 
Finally if it can be written as $a_k^{-1}m_ka_k^{-1}$ it is \emph{Type 2}, and we define $s_k=a_k^{-1}$ and $l_k=a_k$. In all of these cases we define $x_k=|s_k|$ and $y_k=|l_k|$.
 
Note that if $\phi(a_k)$ is of Type 0 for all $k$, then $\phi$ is $v$-nontrivial.

\begin{lem}
\label{niceenough}
Let $f: X \to X$ be a map whose induced homomorphism $\phi: \pi_1(X) \to \pi_1(X)$ is $w$-nontrivial, and let $s_k, l_k$ be given as above.
Specify a particular ordering of the set
$$ \{ \phi(a_1), \phi(a_1)^{-1}, \dots, \phi(a_n), \phi(a_n)^{-1} \} \text{ as } \{v_1, \dots, v_{2n} \}, $$ And say that some particular $v_i$ is \emph{positive} or \emph{negative} according to the exponent in $v_i = \phi(a_i)^{\pm 1}$. 

If for all $j<i$ we have $v_i \neq v_j|_{|s_i|}$ when $v_i$ is positive, and $v_i \neq v_j|_{|l_i|}$ when $v_i$ is negative,
then $\phi \in V_n$. 
\end{lem}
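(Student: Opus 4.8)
The plan is to prove the lemma by showing that under the stated hypotheses the Wagner tails of $\phi$ are pairwise distinct apart from the repeated trivial tail $1$ at the base point, which is exactly the defining condition for $\phi\in V_n$. This is a bookkeeping strengthening of the proof of Lemma \ref{nicesowecken}: rather than tracking the single letters $s_i,l_i$ we track the $2n$ words $v_1,\dots,v_{2n}$.

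First, since $\phi$ is $w$-nontrivial, every non-base-point Wagner tail is nontrivial, so no such tail can equal a base-point tail and the only coincidence among base-point tails is the harmless repeated $1$; it thus suffices to rule out coincidences among the non-base-point tails. The engine of the argument is a structural fact read off from the four Type descriptions: if a fixed point $x$ comes from an occurrence of $a_k^{\pm1}$ in $\phi(a_k)$, then $w_x$ is a prefix of $\phi(a_k)$ with $|w_x|\ge x_k$ and $w_x|_{x_k}=s_k$, while $\overline{w}_x$ is a prefix of $\phi(a_k)^{-1}$ with $|\overline{w}_x|\ge y_k$ and $\overline{w}_x|_{y_k}=l_k$. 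If $\phi(a_k)$ begins with $a_k^{-1}$ then $s_k=a_k^{-1}$ and each $w_x$ begins with $a_k^{-1}$; otherwise $s_k$ is the nonempty $a_k$-free prefix of $\phi(a_k)$ and every occurrence of $a_k^{\pm1}$ lies beyond it, so $w_x$ begins with $s_k$. The statement for $\overline{w}_x$ follows by reading $\phi(a_k)$ backwards, and $w$-nontriviality is precisely what forbids $\phi(a_k)$ from beginning or ending with $a_k$, so the four Types are exhaustive.

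Now suppose for contradiction that a word $t$ occurs as a Wagner tail of two distinct fixed points, or occurs as both $w_x$ and $\overline{w}_x$ of one fixed point $x$. Each fixed point involved exhibits $t$ as a prefix of a definite signed word in $\{v_1,\dots,v_{2n}\}$: from $x$ we get $V_A=\phi(a_k)$ if $t=w_x$ and $V_A=\phi(a_k)^{-1}$ if $t=\overline{w}_x$, and likewise a word $V_B$ from $x'$. If $V_A=V_B$ then $t$ is realized as the same kind of tail at two occurrences in one reduced word, and distinct occurrences give tails of distinct lengths (exactly as in Lemma \ref{dirrel}), so the occurrences coincide, contradicting $x\ne x'$; in the single-fixed-point situation $V_A\ne V_B$ automatically since $\phi(a_k)\ne\phi(a_k)^{-1}$. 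So we may assume $V_A\ne V_B$; relabel so $V_A=v_i$, $V_B=v_j$ with $j<i$, and put $u=s_i$ if $v_i$ is positive and $u=l_i$ if $v_i$ is negative (where $s_i,l_i$ are the words attached to the generator underlying $v_i$, as in the statement). The structural fact gives $|t|\ge|u|$ and $t|_{|u|}=u=v_i|_{|u|}$, and since $t$ is also a prefix of $v_j$ with $|t|\ge|u|$ we get $v_j|_{|u|}=t|_{|u|}$; hence $v_i|_{|u|}=v_j|_{|u|}$, contradicting the hypothesis for the pair $j<i$. Thus no coincidence occurs and $\phi\in V_n$.

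The main obstacle is the uniformity of the bookkeeping rather than any single deep step: the structural fact must be checked verbatim for all four Types and both signs of each occurrence, and one must confirm that in every configuration in which two tails could agree — two $w$-tails, two $\overline{w}$-tails, or a mixed pair; coming from the same image word or from different ones; including the degenerate case of a single fixed point whose two tails agree — it is always the later word $v_i$ in the chosen ordering whose distinguished prefix ($s_i$ or $l_i$) the shared tail is forced to realize, so that the one-directional hypothesis ``for all $j<i$'' is enough. Some extra care is warranted for degenerate inputs, such as an image word containing only one occurrence of its own generator, or a homomorphism where $\phi(a_k)$ and $\phi(a_m)^{\pm1}$ coincide for $k\ne m$, where the Type conventions or the list $\{v_1,\dots,v_{2n}\}$ behave atypically; such cases are negligible for the density estimates that use the lemma.
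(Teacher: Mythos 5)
Your proof is correct and follows the same approach the paper intends: the paper's own proof is a one-line remark that Lemma \ref{nicesowecken}'s argument carries over to an arbitrary ordering, and you have filled in exactly that argument, organizing it cleanly around the unified structural fact that every Wagner tail is a prefix of the relevant $v_i$ of length at least $|s_m|$ (resp.\ $|l_m|$) with the distinguished prefix $s_m$ (resp.\ $l_m$), which holds uniformly across Types 0, 1a, 1b, and 2.
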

\begin{proof}
Similar to that of Lemma \ref{nicesowecken}. In Lemma \ref{nicesowecken} we used the ordering where $v_1= \phi(a_1), v_2 = \phi(a_1)^{-1}, \dots, v_{2n-1}= \phi(a_n), v_{2n}= \phi(a_n)^{-1}$. The choice of this particular ordering is arbitrary, however, and so essentially the same proof will suffice.
\end{proof}
 
Let $L_n$ be the set of homomorphisms satisfying the conditions of the above Lemma. Then $ K_n \subset L_n \subseteq V_n$, and so we will be able to use $D(L_n)$ as an improved lower bound on $D(V_n)$. 

\begin{lem}

\[
| G_p^n \cap L_n | \geq \sum_{c=0}^{n} \sum_{b=0}^{n-c} \binom{n}{c} \binom{n-c}{b} 2^b R(n,p,c,b)
\]
where
\begin{align*}
 R(n,p,c,b)&=
S(n,p)^c
\left(
\prod_{j=1}^{b} \sum_{y=1}^{p-3} T(n,p,c,j, y)
\right) \left(
\prod_{k=c+b+1}^{n}
 \sum_{x=1}^{p-4}
  \sum_{y=1}^{p-x-3}4X_kZ_kY_k
  \right)
\\
S(n,p)&= (2n-1)^{p-2}-1,
\\
T(n,p,c,b, j, y)&=2 \left( (2n-1)^{p-y-2}-1 \right) \left( (2n-2)(2n-3)^{y-1}-(2c+b+j-1) \right)
\end{align*}
and $ X_k, Y_k, Z_k$ are defined as in Lemma \ref{xkyklem}.
\end{lem}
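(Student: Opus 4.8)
The plan is to mimic the constructive count of Lemma \ref{xkyklem}: build a homomorphism $\phi$ image by image, together with an ordering of the $2n$ words $\phi(a_i)^{\pm1}$ witnessing the hypothesis of Lemma \ref{niceenough}, and tally the legal choices at each stage. Every $\phi$ so produced lies in $L_n$, so the resulting count is a lower bound for $|G_p^n \cap L_n|$.

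First I would stratify by \emph{type profile}. For $0\le c\le n$ and $0\le b\le n-c$, count the $\phi$ for which exactly $c$ of the images $\phi(a_k)$ are Type 2, exactly $b$ are Type 1, and the rest are Type 0: there are $\binom{n}{c}$ choices for the Type 2 indices, $\binom{n-c}{b}$ for the Type 1 indices, and a binary choice (1a versus 1b) at each Type 1 index, contributing $2^b$. Because the type of $\phi(a_k)$ is read off from $\phi(a_k)$ itself (whether it begins and/or ends with $a_k^{-1}$), the various profiles give disjoint families and their counts add, which produces the outer double sum.

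Next, fix a profile, relabel so that the Type 2 indices are $1,\dots,c$, the Type 1 indices are $c+1,\dots,c+b$, and the Type 0 indices are $c+b+1,\dots,n$, and fix the following ordering of the $2n$ words: first the $2c$ Type 2 words; then, for each Type 1 index, the one orientation whose governing prefix in Lemma \ref{niceenough} has length one; then the other orientation of each Type 1 index; then the $2(n-c-b)$ Type 0 words. With this ordering, the count at each stage is a routine finite computation. A Type 2 index $k$ has $\phi(a_k)=a_k^{-1}m_ka_k^{-1}$ with $|s_k|=|l_k|=1$, so its two constraints merely forbid $\phi(a_k)^{\pm1}$ from agreeing in its forced first letter $a_k^{\mp1}$ with an earlier word, which cannot occur since the earlier first letters are all $a_\ell^{\pm1}$ with $\ell\ne k$; the only freedom is the interior $m_k$, giving $S(n,p)=(2n-1)^{p-2}-1$ choices, hence $S(n,p)^c$. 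A Type 1 index numbered $j$-th in its block has one length-one prefix (whose constraint is again automatically satisfied, by the same first-letter argument) and one $a_k$-free ``long'' tail of length $y\in\{1,\dots,p-3\}$; the orientation carrying the long tail occupies position $2c+b+j$ of the ordering, so its tail must avoid at most $2c+b+j-1$ length-$y$ initial subwords of earlier words, leaving $(2n-2)(2n-3)^{y-1}-(2c+b+j-1)$ choices, times $2$ for the free $a_k^{\pm1}$ exponent and $(2n-1)^{p-y-2}-1$ for the interior; summing over $y$ and multiplying over $j=1,\dots,b$ gives the middle factor of $R$. A Type 0 index $k$ (with $c+b+1\le k\le n$) is in exactly the situation of Lemma \ref{xkyklem} with $k-1$ earlier indices already fixed, giving $\sum_{x=1}^{p-4}\sum_{y=1}^{p-x-3}4X_kZ_kY_k$, and the product over these $k$ is the last factor. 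Multiplying the three factors is $R(n,p,c,b)$, and summing over profiles gives the asserted inequality.

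The main point requiring care — really the only nontrivial step — is arranging the ordering so that the length-one prefix constraints attached to one orientation of each Type 1 and each Type 2 index are vacuous, and checking that the long-tail constraint of a Type 1 index really sees only $2c+b+j-1$ earlier words: this is precisely why the Type 2 words and the short-constraint orientations must be placed before the long-constraint orientations. One should also record, as in Lemma \ref{xkyklem}, that $(2n-2)(2n-3)^{y-1}-(2c+b+j-1)$ and $Y_k$ may be non-positive for small parameters; this leaves the lower bound valid, since a non-positive summand only weakens it, and (as with $K_n$) it is harmless for $n>2$, while $n=2$ is handled separately.
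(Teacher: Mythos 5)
Your proposal is correct and follows the same route as the paper: stratify by type profile with $\binom{n}{c}\binom{n-c}{b}2^b$, place the Type 2 words and the short orientations of the Type 1 words at the front of the ordering so that their length-one prefix constraints are automatically satisfied by the first-letter argument, count the long tails of the Type 1 words at positions $2c+b+j$ yielding the $T$ factor, and reduce the Type 0 count to Lemma \ref{xkyklem} shifted by $c+b$. The remark that non-positive summands leave the lower bound formally valid but useless (and that $n=2$ must be handled separately) also matches the paper's treatment.
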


\begin{proof}
We will count the number of ways to construct a homomorphism $\phi \in L_n \cap G_p^n$.
Let $c$ be the number of $i$ such that  $\phi(a_i)$ is of Type 2. Then there are $\binom{n}{c}$ ways to select the $\phi(a_i)$ of Type 2. Let $b$ be the number of $i$ such that $\phi(a_i)$ is of Type 1. Then there are $\binom{n-c}{b}$ ways to choose which of the remaining $\phi(a_i)$ will be Type 1, and $2^b$ ways to determine whether each $\phi(a_i)$ of Type 1 is of Type 1a or Type 1b. 

Write $a_1, \dots, a_n$ according to the type of $\phi(a_i)$, in descending order, as 
\[ a_{i_1}, \dots, a_{i_c}, a_{j_1}, \dots, a_{j_b}, a_{k_1}, \dots a_{k_{n-b-c}}, \]
so that each $\phi(a_{i_*})$ is Type 2, each $\phi(a_{j_*})$ is Type 1, and each $\phi(a_{k_*})$ is Type 0.
 We will assume without loss of generality that all $\phi(a_j)$ of Type 1 are of Type 1a. (Distinguishing between 1a and 1b will not affect the counts.)
Then we will use the following ordering of $s_1, l_1, \dots, s_n, l_n$:
\begin{align*}
\{v_1, \dots, v_{2n}\} = \{ &\phi(a_{i_1}), \phi(a_{i_1})^{-1}, \dots, \phi(a_{i_c}), \phi(a_{i_c})^{-1}, \\
&\phi(a_{j_1}), \dots, \phi(a_{j_b}), \phi(a_{j_1})^{-1}, \dots, \phi(a_{j_b})^{-1}, \\
&\phi(a_{k_1}), \phi(a_{k_1})^{-1}, \dots \phi(a_{k_{n-b-c}}),  \phi(a_{k_{n-b-c}})^{-1} \}
\end{align*}

We will now count the ways to choose the words $\phi(a_i)$ such that the ordering above satisfies the condition of Lemma \ref{niceenough}, and thus $\phi \in L_n$. 

For each $i$, let $p_i = |\phi(a_i)|$. 

We will first determine the $\phi(a_i)$ that are Type 2, that is, $\phi(a_i) = a_i^{-1}m_ia_i^{-1}$. For these words we have $|s_i| = |l_i| = 1$ and the first letters of $\phi(a_i)^{\pm 1}$ are all distinct. Thus the Lemma \ref{niceenough} will automatically be satisfied for the partial list 
\begin{equation}\label{partial1}
\{v_1, \dots, v_{2c} \} = 
\{ \phi(a_{i_1}), \phi(a_{i_1})^{-1}, \dots, \phi(a_{i_c}), \phi(a_{i_c})^{-1} \}.
\end{equation}
Thus we may choose the words $m_i$ freely without being careful to satisfy Lemma \ref{niceenough}.

There are at least $(2n-1)^{p_i-3}(2n-2)$ ways to determine $m_i$ and thus $\phi(a_i)$. Letting $p_i$ vary from $3$ to $p$, we have
\[ \sum_{p_i=3}^{p}(2n-1)^{p_i-3}(2n-2)= (2n-1)^{p-2}-1= S(n,p) \]
ways to determine each individual $\phi(a_i)$, and thus 
$\prod_{i=1}^{c}S(n,p) = S(n,p)^c$ ways to determine all of the $\phi(a_i)$ of Type 2. 
 
Now we will determine the $\phi(a_j)$ of Type 1, which we have assumed are Type 1a. Since these are of Type 1a we will have $s_{i_j} = a_{i_j}^{-1}$, and thus the partial list
\begin{equation}\label{partial2}
\{ v_1, \dots, v_{2c+b} \}
= \{ \phi(a_{i_1}), \phi(a_{i_1})^{-1}, \dots, \phi(a_{i_c}), \phi(a_{i_c})^{-1}, \phi(a_{j_1}), \dots, \phi(a_{j_b}) \}
\end{equation}
satisfies the conditions of Lemma \ref{niceenough}.

Since $\phi(a_j)$ is Type 1a, we have $\phi(a_j) = a_j^{-1}m_j a_j^{\pm1}l_j^{-1}$.
First choose the exponent of the $a_j^{\pm1}$ immediately preceding $l_j^{-1}$ in one of 2 ways. 
For each $j=1, 2, \dots b$, let $y_j$ denote the length of $l_j$ and we can freely choose $y_1, \dots, y_b$. 

For a given $p_j=|\phi(a_j)|$  there are $(2n-1)^{p_j-y_j-3}(2n-2)$ ways to determine $m_j$.
So there are $\sum_{p_j=3}^{p}(2n-1)^{p_j-y_j-3}(2n-2)= (2n-1)^{p-y_j-2}-1$ ways to determine $m_j$.

Now we have to determine $l_{j_h}$ for each $h=1, 2, \dots, b$ such that the conditions of Lemma \ref{niceenough} are fulfilled in the partial list
\begin{equation}\label{partial3}
\{ v_1, \dots, v_{2c+b} \} = 
\{ \phi(a_{i_1}), \phi(a_{i_1})^{-1}, \dots, \phi(a_{i_c}), \phi(a_{i_c})^{-1}, \phi(a_{j_1}), \dots, \phi(a_{j_b}), \phi(a_{j_1})^{-1}, \dots, \phi(a_{j_b})^{-1} \}. 
\end{equation}

For $y=y_{j_h} = |l_{j_h}|$ there are $(2n-2)(2n-3)^{y-1}$ ways to select $l_{j_h}$ to be $a_{j_h}^{\pm1}$-free. However, we cannot choose $l_{j_h}$ such that it equals the previously chosen initial subwords of length $y$ in the list \eqref{partial3}, of which there are at most $2c+b+h-1$.  So we have at least $(2n-2)(2n-3)^{y-1}- (2c+b+h-1)$ ways to determine $l_{j_h}$ so that the condition of Lemma \ref{niceenough} holds for the partial list \eqref{partial3}.

So, over our $\phi(a_{j_h})$ of Type 1, we have 
\[ \prod_{j=1}^{b}2( (2n-1)^{p-y-2}-1) ((2n-2)(2n-3)^{y-1}- (2c+b+j-1)) \] choices.
Summing over the possible values of $y$, we have
\begin{align*} 
\sum_{y=1}^{p-3} \prod_{j=1}^{b} &2 \left( (2n-1)^{p-y-2}-1\right)\left( (2n-2)(2n-3)^{y-1}- (2c+b+j-1)\right) \\
&= \prod_{j=1}^{b} \sum_{y=1}^{p-3} T(n,p,c,b,j, y)
\end{align*}
choices for the $\phi(a_{j_h})$ of Type 1.

Finally, we will consider our $\phi(a_{k_h})$ of Type 0, which can be written as $s_{k_h}a_{k_h}^{\pm1}m_{k_h}a_{k_h}^{\pm1}l_{k_h}^{-1}$.
The count for these words is exactly analogous to the count used in Lemma \ref{xkyklem}. The difference is that the $2(k-1)$ in the definition of $X_k$ must become $2(c+b+h-1)$ since there will be this many restrictions on the choice of the word $s_{k_h}$ at this stage in order for the condition of Lemma \ref{niceenough} to be satisfied. Similarly the $2(k-1) - 1$ in the definition of $Y_k$ must become $2(c+b+h-1)-1$. These changes are equivalent to replacing $k$ with $c+b+h$, and thus we have at least
\[ \sum_{x=1}^{p-4} \sum_{y=1}^{p-x-3} 4X_{c+b+h}Z_{c+b+h}Y_{c+b+h} \]
choices for $\phi(a_{k_h})$. 

There are $n-b-c$ such words of Type 0, so we have a total of 
\[ 
\prod_{h=1}^{n-c-b} \sum_{x=1}^{p-4} \sum_{y=1}^{p-x-3} 4X_{c+b+h}Z_{c+b+h}Y_{c+b+h} = \prod_{k=c+b+1}^n \sum_{x=1}^{p-4} \sum_{y=1}^{p-x-3} 4X_kZ_kY_k
\]
choices for all the words of Type 0.

So for a given $b$ and $c$ we have \[
\binom{n}{c} \binom{n-c}{b} 2^b 
S(n,p)^c
\left(
\prod_{j=1}^{b} \sum_{y=1}^{p-3} T(n,p,c,b,j, y)
\right) \left(
\prod_{k=c+b+1}^{n}
 \sum_{x=1}^{p-4}
  \sum_{y=1}^{p-x-3}4X_kZ_kY_k
  \right)
\]
ways to determine $\phi \in G_p^n \cap L_n$. Summing over $c$ and $b$ gives us the desired result. 
\end{proof}

\begin{thm}\label{bestbound}
\[
D(L_n) \geq d^*_n= \sum_{c=0}^{n} \sum_{b=0}^{n-c} \binom{n}{c} 
 \binom{n-c}{b} 2^b \left( \frac{n-1}{n(2n-1)^2} \right)^c
K
\prod_{j=1}^{b} \frac{2(n-1)^2- (2c+b+j-1)}{n(2n-1)^2}
\]
where
\[ K=  
 \prod_{k=c+b+1}^{n}
 \frac{4(n-1)^4-(8k-6)(n-1)^2 + 4k^2-6k+2}{n(n-1)(2n-1)^2}
\]

\end{thm}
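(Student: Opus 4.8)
\emph{Proof proposal.} The plan is to take the lower bound for $|G_p^n \cap L_n|$ established in the preceding lemma, divide through by $|G_p^n| = |G_p|^n$, and pass to the limit as $p \to \infty$. Since the outer double summation over $c$ and $b$ is finite and independent of $p$, the limit commutes with it, so the problem reduces to evaluating $\lim_{p \to \infty} R(n,p,c,b)/|G_p|^n$ for each fixed pair $(c,b)$.

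To evaluate this I would split $|G_p|^n = |G_p|^{c}\cdot|G_p|^{b}\cdot|G_p|^{\,n-c-b}$, so that $R(n,p,c,b)/|G_p|^n$ breaks into three groups of factors matching the three grouped products in $R(n,p,c,b)$: (i) the $c$ factors $\tfrac{1}{|G_p|}S(n,p)$ coming from the Type~2 images, (ii) the $b$ factors $\tfrac{1}{|G_p|}\sum_{y=1}^{p-3}T(n,p,c,b,j,y)$ coming from the Type~1 images, and (iii) the $n-c-b$ factors $\tfrac{1}{|G_p|}\sum_{x=1}^{p-4}\sum_{y=1}^{p-x-3}4X_kZ_kY_k$ coming from the Type~0 images. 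Each of these finitely many factors has its own finite limit, so the limit of the product equals the product of the limits. From the closed form $|G_p| = (n(2n-1)^p-1)/(n-1)$ one obtains $\lim_{p\to\infty}S(n,p)/|G_p| = \frac{n-1}{n(2n-1)^2}$, which over the $c$ factors gives $\big(\frac{n-1}{n(2n-1)^2}\big)^c$. The Type~0 factor limit is precisely the computation already performed in the proof of Theorem~\ref{xkykdens}, equal to $\frac{4(n-1)^4-(8k-6)(n-1)^2+4k^2-6k+2}{n(n-1)(2n-1)^2}$; taking the product over $k=c+b+1,\dots,n$ produces $K$. For the Type~1 factor I would expand $T$ and sum over $y$: the result is a combination of two finite geometric series (with ratios $2n-1$ and $2n-3$) which can be summed in closed form, and after dividing by $|G_p|$ and letting $p\to\infty$ only the leading contributions survive, giving $\frac{2(n-1)^2-(2c+b+j-1)}{n(2n-1)^2}$; the product over $j=1,\dots,b$ supplies the remaining factor.

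Multiplying these limits together, the limit of the $(c,b)$ term $\binom{n}{c}\binom{n-c}{b}2^b R(n,p,c,b)/|G_p|^n$ is exactly the $(c,b)$ summand of $d^*_n$, and summing over $c$ and $b$ yields $D(L_n)\ge d^*_n$. As with Theorem~\ref{xkykdens}, this argument strictly speaking bounds $\liminf_{p}|G_p^n\cap L_n|/|G_p|^n$ from below, which gives the stated inequality whenever $D(L_n)$ exists.

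The step I expect to be the main obstacle is not conceptual but computational: carrying out the closed-form summation of $\sum_{y}T(n,p,c,b,j,y)$ and correctly discarding the terms that vanish after division by $|G_p|$, bearing in mind---as already flagged for $X_k$ and $Y_k$ in Lemma~\ref{xkyklem}---that $T$ can be zero or negative for small parameter values, which is harmless since we only seek a lower bound. As elsewhere in the paper, this geometric-series bookkeeping is tedious by hand but routine, and can be handed off to Mathematica; the genuinely conceptual points, namely that the limit may be pulled through the finite outer sum and through the finitely many product factors, are immediate.
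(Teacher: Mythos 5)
Your proposal matches the paper's argument essentially step for step: both pull the limit through the finite outer sum, factor $|G_p|^n$ across the three grouped products in $R(n,p,c,b)$ (Type 2, Type 1, Type 0), evaluate each group's limit separately (the first directly from the formulas for $S(n,p)$ and $|G_p|$, the third by citing the Theorem~\ref{xkykdens} computation, the second by a geometric-series simplification done in Mathematica), and multiply. Your remark about $\liminf$ versus the assumed existence of $D(L_n)$, and the observation that $T$ may be nonpositive but this is harmless for a lower bound, are small refinements the paper leaves implicit, but the route is the same.
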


\begin{proof}
By the previous lemma we have
\[
\lim_{p\to\infty} \frac{|L_n\cap G_p^n|}{|G_p|^n} 
\ge \sum_{c=0}^n\sum_{b=0}^{n-c} \binom{n}{c} \binom{n-c}b 2^b \lim_{p\to\infty} \frac{1}{|G_p|^n} R(n,p,c,b),
\]
and 
\begin{equation}\label{rprod}
\frac{1}{|G_p|^n} R(n,p,c,b) \ge \left(  \frac{S(n,p)}{|G_p|} \right)^c \left( \prod_{j=1}^b \sum_{y=1}^{p-c} \frac{T(n,p,c,b,j,y)}{|G_p|} \right) \left( \prod_{k=c+b+1}^n \frac{X_kZ_kY_k)}{|G_p|} \right).
\end{equation}
We will estimate each of the three factors on the right in the limit as $p\to\infty$.

The first factor of \eqref{rprod} is easily evaluated. The formulas for $S(n,p)$ and $|G_p|$ give:
\[ \lim_{p\to\infty} \frac{S(n,p)}{|G_p|} = \frac{n-1}{n(2n-1)^2} \]

The terms of the third factor of \eqref{rprod} are exactly the terms encountered in Theorem \ref{xkykdens}. By the same argument, a complicated simplification preferably done by computer, we have 
\[ \lim_{p\to\infty} \prod_{k=c+b+1}^n \frac{X_kZ_kY_k}{|G_p|} = K. \]

For the second factor of \eqref{rprod} we require another complicated summation most easily done on a computer. Again Mathematica computes the limit as follows:
\[ \lim_{p\to\infty} \prod_{j=1}^b \sum_{y=1}^{p-c} \frac{T(n,p,c,b,j,y)}{|G_p|} =  \prod_{j=1}^{b} \frac{2(n-1)^2- (2c+b+j-1)}{n(2n-1)^2}\]

Combining the three above calculations in \eqref{rprod} gives the result.
\end{proof}

This lower bound takes the following values:
\[
\begin{tabular}{c|cccccc}
$n$ & 2 & 3 & 5 & 10 & 20 & 50 \\
\hline
$d_n^*$ & $<0$ & .0252 & .0694 & .1029 & .1193 & .1289
\end{tabular}
\]
These values appear to approach a limit as $n\to \infty$, but the formula for $d_n^*$ is too complicated to easily evaluate the limit. Nevertheless the following conjecture seems clear:
\begin{conj}
$\displaystyle \lim_{n \to \infty} d^*_n = e^{-2} \approx .1353$
\end{conj}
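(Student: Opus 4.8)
\textit{Proof proposal.} The plan is to view the double sum in Theorem~\ref{bestbound} as a series indexed by pairs $(c,b)$, pass to the limit term by term, and justify the interchange of limit and summation by the dominated convergence theorem for series (Tannery's theorem). Write
\[ t_n(c,b)=\binom{n}{c}\binom{n-c}{b}2^b\Bigl(\tfrac{n-1}{n(2n-1)^2}\Bigr)^c K_{c,b,n}\prod_{j=1}^b\tfrac{2(n-1)^2-(2c+b+j-1)}{n(2n-1)^2}, \]
extended by $t_n(c,b)=0$ whenever $c+b>n$, where $K_{c,b,n}=\prod_{k=c+b+1}^n F_n(k)$ and $F_n(k)=\tfrac{4(n-1)^4-(8k-6)(n-1)^2+4k^2-6k+2}{n(n-1)(2n-1)^2}$ is the $k$-th factor occurring in $d_n$ of Lemma~\ref{denslim1}, so that $d_n^*=\sum_{c\ge0}\sum_{b\ge0}t_n(c,b)$.

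First I would compute $\lim_{n\to\infty}t_n(c,b)$ for each fixed $(c,b)$. Since $\binom nc\sim n^c/c!$ and $\tfrac{n-1}{n(2n-1)^2}\sim\tfrac1{4n^2}$, the factor $\binom nc\bigl(\tfrac{n-1}{n(2n-1)^2}\bigr)^c$ tends to $0$ for $c\ge1$ and to $1$ for $c=0$. Since $\binom{n-c}b\sim n^b/b!$ and $\tfrac{2(n-1)^2-(2c+b+j-1)}{n(2n-1)^2}\sim\tfrac1{2n}$ for each fixed $j$, the factor $\binom{n-c}b\,2^b\prod_{j=1}^b\tfrac{2(n-1)^2-(2c+b+j-1)}{n(2n-1)^2}$ tends to $\tfrac1{b!}$. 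Finally $K_{c,b,n}=d_n\big/\prod_{k=1}^{c+b}F_n(k)$, and since $d_n\to e^{-3}$ by Lemma~\ref{denslim1} while $F_n(k)\to1$ for each fixed $k$, we get $K_{c,b,n}\to e^{-3}$. Hence $t_n(c,b)\to0$ for $c\ge1$ and $t_n(0,b)\to e^{-3}/b!$.

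Second, I would produce a nonnegative, $n$-independent, summable dominant. For $n$ past an explicit constant, the numerators $2(n-1)^2-(2c+b+j-1)$ (here $2c+b+j-1\le 2n$, since $c+b\le n$ and $j\le b$) and $4(n-1)^4-(8k-6)(n-1)^2+4k^2-6k+2$ (here $k\le n$) are positive by leading-term dominance, so $t_n(c,b)\ge0$. The elementary inequalities $\tfrac{n-1}{n(2n-1)^2}\le\tfrac1{n^2}$ and $\tfrac{2(n-1)^2}{n(2n-1)^2}\le\tfrac2n$ yield
\[ \binom nc\Bigl(\tfrac{n-1}{n(2n-1)^2}\Bigr)^c\le\tfrac1{c!},\qquad \binom{n-c}b\,2^b\prod_{j=1}^b\tfrac{2(n-1)^2-(2c+b+j-1)}{n(2n-1)^2}\le\tfrac{4^b}{b!}. \]
For $K_{c,b,n}$ a polynomial division gives $F_n(k)-1=\tfrac{-8n^3+(25-8k)n^2+(16k-27)n+4k^2-14k+12}{4n^4-8n^3+5n^2-n}$; using $1\le k\le n$ one checks that for $n$ large the numerator is $\le-4n^3<0$ while the denominator lies in $(0,4n^4]$, so $0<F_n(k)\le1-\tfrac1n<1$ uniformly in $k$, and therefore $K_{c,b,n}\le(1-\tfrac1n)^{\,n-c-b}\le1$. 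Multiplying the three estimates, $0\le t_n(c,b)\le\tfrac{4^b}{b!\,c!}=:u(c,b)$ with $\sum_{c,b\ge0}u(c,b)=e\cdot e^4<\infty$. Tannery's theorem then gives
\[ \lim_{n\to\infty}d_n^*=\sum_{c\ge0}\sum_{b\ge0}\lim_{n\to\infty}t_n(c,b)=\sum_{b\ge0}\frac{e^{-3}}{b!}=e^{-3}\cdot e=e^{-2}. \]

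The hard part will be the uniform bound $K_{c,b,n}\le1$: $K_{c,b,n}$ is a product of up to $n$ factors, each only $1-O(1/n)$, so any estimate that is not uniform over the full range $1\le k\le n$ is useless; the substance of the argument is exactly that $F_n(k)\le1-\tfrac1n$ holds for \emph{every} admissible $k$ simultaneously, which the polynomial-division estimate of the numerator of $F_n(k)-1$ (together with $k\le n$) supplies. A secondary point, again handled by leading-term analysis, is checking that no lower-order corrections push $F_n(k)$ or the factors $2(n-1)^2-(2c+b+j-1)$ negative for large $n$, so that the dominated-convergence step is legitimate. The computation behind Lemma~\ref{denslim1} via Lemma~\ref{qkm} is a useful template, but the genuinely new feature here is the extra summation over $b$ of terms asymptotic to $1/b!$, which is precisely what converts the $e^{-3}$ of Lemma~\ref{denslim1} into $e^{-2}$.
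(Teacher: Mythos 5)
The statement you are proving is left in the paper as a \emph{conjecture}: the authors offer numerical evidence only, saying ``the formula for $d_n^*$ is too complicated to easily evaluate the limit.'' There is therefore no paper proof to compare against, and your argument is a genuine extension of the paper's results. Having checked the details, I believe the proof is correct. The term-by-term limits are right: for $c\ge 1$ the factor $\binom nc\bigl(\tfrac{n-1}{n(2n-1)^2}\bigr)^c\sim\tfrac1{c!(4n)^c}\to0$, while at $c=0$ the Type-1 block tends to $1/b!$ and $K_{0,b,n}=d_n\big/\prod_{k=1}^bF_n(k)\to e^{-3}$ by Lemma~\ref{denslim1}, so $\sum_b e^{-3}/b!=e^{-2}$. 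The dominant $u(c,b)=4^b/(b!\,c!)$ follows from $\binom nc\le n^c/c!$, $\tfrac{n-1}{n(2n-1)^2}\le n^{-2}$, $\tfrac{2(n-1)^2}{n(2n-1)^2}\le 2/n$, and the uniform bound $K_{c,b,n}\le1$, which is the real content of the argument: you correctly reduce this to showing $F_n(k)\le1-\tfrac1n$ uniformly over $1\le k\le n$, and the polynomial-division estimate does the job (the numerator of $F_n(k)-1$ is decreasing in $k$ for large $n$, so its maximum is at $k=1$, where $-8n^3+17n^2-11n+2\le-4n^3$ once $n\ge4$). You also rightly note that nonnegativity of every factor, hence of $t_n(c,b)$, must be secured before Tannery's theorem applies, and leading-term dominance supplies this past a threshold. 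Two small expository improvements: state an explicit $N_0$ (a short computation shows $n\ge5$ suffices for every inequality used), and note that when $c+b=n$ the product defining $K_{c,b,n}$ is empty so $K_{c,b,n}=1$, keeping the bound $K\le1$ valid at the boundary. It is worth remarking that Lemma~\ref{qkm}, the paper's tool for the single-product limit in Lemma~\ref{denslim1}, does not by itself reach this conjecture: the extra summation over $(c,b)$ introduces the $1/b!$ Poisson weights that convert $e^{-3}$ into $e^{-2}$, and dominated convergence (Tannery) is exactly the right device for controlling that outer sum, which is what your argument supplies.
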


The above computed values for $d^*_n$ are not obviously tending to $e^{-2}$, but some higher values of $n$ make the limit a bit clearer:
\[
\begin{tabular}{c|cccccc}
$n$ & 500 & 1000 & 1500 & 2000 & 2500 \\
\hline
$d_n^*$ & .1347 &  .1350 & .1351 & .1351 & .1352
\end{tabular}
\]

\end{document}